\documentclass{amsart}
\usepackage{latexsym,amsmath,amssymb}
\newtheorem{thm}{Theorem}[section]
\newtheorem{cor}[thm]{Corollary}

\newtheorem{prop}[thm]{Proposition}
\theoremstyle{definition}\newtheorem{defn}[thm]{Definition}
\theoremstyle{remark}

\numberwithin{equation}{section}

\begin{document}

\title[]
{Some classes of composition operators on Orlicz spaces}

\author{\sc\bf   Z. Huang and Y. Estaremi }
\address{ \sc Z. Huang}
\email{jameszhuang923@gmail.com}
\address{Huxley Building Department of Mathematics, South Kensington Campus,Imperial College London, London, UK.}
\address{\sc Y. Estaremi}
\email{y.estaremi@gu.ac.ir}
\address{Department of Mathematics, Faculty of Sciences, Golestan University, Gorgan, Iran}


\thanks{}

\thanks{}

\subjclass[2020]{47B33, 37B65}

\keywords{Orlicz spaces, Composition operators, Expansivity, structural stability.}

\date{}

\dedicatory{}

\commby{}

\begin{abstract}
The notions of expansivity and positive expansivity for composition operators on Orlicz spaces are investigated. In particular, necessary and sufficient conditions are given for a composition operator to be expansive, positively expansive, and uniformly expansive. Additionally, equivalent conditions for these concepts are provided in the case that the system is dissipative.
\end{abstract}

\maketitle

\section{ \sc\bf Introduction and Preliminaries}
An Orlicz space $L^{\Phi}$ is known to be a natural generalization of $L^p$ spaces and has been considered
 in various areas such as probability theory, partial differential equations, and mathematical finance.
 The dynamics of bounded linear operators on infinite-dimensional Banach spaces have been studied
 for decades and applied to obtain meaningful results in other areas of mathematics such as number
 theory, ergodic theory, and geometry of Banach spaces. The study of composition operators in linear
 dynamics has been considered by many researchers because such a class is concrete and large enough to
 generate various examples and counterexamples. Interested readers can refer to \cite{bm,br,bmpp,gm}.

In \cite{eh}, Eisenberg and Hedlund investigated the relationships between expansivity and the spectrum of operators on Banach spaces. Additionally, in \cite{nbpc}, the authors established relationships between notions of expansivity and hypercyclicity, supercyclicity, Li–Yorke
 chaos, and shadowing. In the case that the Banach space is $c_0$ or $l_p$ $(1\leq p<\infty)$, they characterized all weighted shift operators which satisfy various notions of
 expansivity. In \cite{mm}, the authors studied the two notions of expansivity and strong structural stability for composition operators on $L^p(\mu)$-spaces, $1\leq p<\infty$. They provided some necessary and sufficient conditions for composition operators to be expansive both in the general and dissipative case. They also showed that, in the dissipative setting, the shadowing
 property implies strong structural stability and provided that these two notions
 are equivalent under some extra hypothesis of positive expansivity. Many basic properties of composition operators on $L^p$ and Orlics spaces have been studied by mathematicians in recent years. One can see for example, \cite{aa,af,aj, j,jh}.

In this paper, we focus on composition operators on Orlicz spaces, whose boundedness was characterized
 in \cite{chkm} by Cui, Hudzik, Kumar, and Maligranda with the growth condition on Young functions called $\Delta_2$. First, we extend the definitions of dissipative systems and bounded distortion property to the setting of Orlicz spaces in such a way that they coincide with the definitions of these concepts in $L^p(\mu)$-spaces, in the special case. Then, we provide some necessary and sufficient conditions for expansivity, positive expansivity, and uniform expansivity of such operators. Moreover, we provide some equivalent conditions for these concepts in the case that the system is dissipative.

In the sequel, for the convenience of the reader, we recall some essential facts on Orlicz spaces for later use. For more details on Orlicz spaces, see \cite{kr,raor}.

A function $\Phi:\mathbb{R}\rightarrow [0,\infty]$ is called a \textit{Young's function}  if $\Phi$ is   convex, $\Phi(-x)=\Phi(x)$, $\Phi(0)=0$ and $\lim_{x\rightarrow \infty} \Phi(x)=+\infty$. Each \textit{Young's function} $\Phi$, has an associate function $\Psi:\mathbb{R} \rightarrow [0, \infty]$ defined by

$$\Psi(y)=\sup\{x\mid y\mid-\Phi(x): x\geq0\}, \ \ \ \ \ y\in \mathbb{R},$$

which is called the \textit{complementary function} to $\Phi$. This function is increasing, convex, $\Psi(0)=0$, $\Psi(-y)=\Psi(y)$ and $\lim_{y\rightarrow \infty} \Psi(y)=+\infty$.

A \textit{Young's function} $\Phi$ satisfies the
$\Delta_{2}$-condition or it is $\Delta_2$-regular, if $\Phi(2x)\leq
K\Phi(x) \; ( x\geq x_{0})$  for some constants
$K>0$ and $x_0>0$. Also, $\Phi$ is said to satisfy the
$\Delta'$ condition, if $\exists c>0$
$(b>0)$ such that
$$\Phi(xy)\leq c\Phi(x)\Phi(y), \ \ \ x,y\geq x_{0}\geq 0.$$
If $x_0 = 0$, then it is said to hold
 globally. If $\Phi \in \Delta'$, then $\Phi \in \Delta_2$.

For a given complete $\sigma$-finite measure space $(X, \mathcal{F}, \mu)$, let $L^0(\mathcal{F})$ be the linear space of equivalence classes of $\mathcal{F}$-measurable real-valued functions on $X$, where functions that are equal $\mu$-almost everywhere on $X$ are identified. The support $S(f)$ of a
 measurable function $f$ is defined as $S(f) := \{x \in X : f(x) \neq 0\}$.

For a \textit{Young's function} $\Phi$, let $\rho_{\Phi}:L^{\Phi}(\mu) \rightarrow \mathbb{R}^{+}$ be defined as $\rho_{\Phi}(f) = \int_{X} \Phi(f) d\mu$ for all $f \in L^{\Phi}(\mu)$. Then the space
$$
L^{\Phi}(\mu) = \left\{f \in L^0(\mathcal{F}) : \exists k > 0, \rho_{\Phi}(kf) < \infty\right\}
$$
is called an Orlicz space. The functional

$$N_{\Phi}(f) = \inf \{k > 0 : \rho_{\Phi}(\frac{f}{k}) \leq 1\}$$
is a norm on $L^{\Phi}(\mu)$ and is called the \textit{gauge norm} (or Luxemburg norm). Also, $(L^{\Phi}(\mu), N_{\Phi}(.))$ is a normed linear space. If almost everywhere equal functions are identified, then $(L^{\Phi}(\mu), N_{\Phi}(.))$ is a Banach space, and the basic measure space $(X, \mathcal{F}, \mu)$ is unrestricted. Hence, every element of $L^{\Phi}(\mu)$ is a class of measurable functions that are almost everywhere equal. Additionally, there is another norm on $L^{\Phi}(\mu)$, defined as follows:

%
%
%
%
\begin{equation}\label{e1}
\|f\|_{\Phi}=\sup\{\int_{X}\mid fg\mid d\mu: g\in B_{\Psi}\}=\sup\{\mid\int_{X}fg d\mu\mid: g\in B_{\Psi}\},
\end{equation}
in which $B_{\Psi}=\{g\in L^{\Psi}(\mu): \int_{X}\Psi(\mid g\mid )d\mu\leq 1\}$.

The norm $\|.\|_{\Phi}$ is called the Orlicz norm. For any $f\in L^{\Phi}(\mu)$, where $\Phi$ is a Young function, we have:

\begin{equation}\label{e2}
N_{\Phi}(f)\leq \|f\|_{\Phi}\leq2N_{\Phi}(f).
\end{equation}

For every $F\in\mathcal{F}$ with $0<\mu(F)<\infty$, we have $N_{\Phi}(\chi_F)=\frac{1}{\Phi^{-1}(\frac{1}{\mu(F)})}$. Furthermore, if $\Phi\in \Delta_2$, then $(L^{\Phi}(\mu))^*=L^{\Psi}(\mu)$, where $\Psi$ is the complementary Young's function to $\Phi$.

Now, we recall the following theorem about convergence of sequences in Orlicz spaces:

\begin{thm}\cite{raor}\label{t1.1}
Let $\{f_n\}_{n\geq1}$ be a sequence in $L^{\Phi}(\mu)$ and $f\in L^{\Phi}(\mu)$. Then the following assertions hold:

(a) If $\|f_n-f\|_{\Phi}\rightarrow 0$ (or equivalently $N_{\Phi}(f_n-f)\rightarrow 0$), then $\rho_{\Phi}(f_n)\rightarrow \rho_{\Phi}(f)$. The converse holds if $\Phi$ is $\triangle_2$-regular.

(b) If $\Phi$ is a $\triangle_2$-regular Young function, or if $\Phi$ is continuous and concave with $\Phi(0)=0$, $\Phi\nearrow$ as well, and $\rho_{\Phi}(f_n)\rightarrow \rho_{\Phi}(f)$ as $n\rightarrow \infty$ and $f_n\rightarrow f$ almost everywhere (a.e.), or in $\mu$-measure, then $f_n\rightarrow f$ in norm.
\end{thm}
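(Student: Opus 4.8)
The plan is to handle both parts of the statement through the single mechanism that links the modular $\rho_\Phi$ to the gauge norm $N_\Phi$, systematically replacing the (unavailable) subadditivity of $\rho_\Phi$ by the convexity of $\Phi$. For the forward implication in (a) I would first record the elementary inequality $\rho_\Phi(g)\le N_\Phi(g)$ whenever $N_\Phi(g)\le 1$: since $\rho_\Phi\big(g/N_\Phi(g)\big)\le 1$ by the definition of the Luxemburg norm and $\Phi(\lambda x)\le\lambda\Phi(x)$ for $0\le\lambda\le1$ by convexity together with $\Phi(0)=0$, scaling by $\lambda=N_\Phi(g)$ gives the claim. Applying this to $g=f_n-f$ shows that $N_\Phi(f_n-f)\to0$ forces $\rho_\Phi(f_n-f)\to0$. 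To upgrade this to $\rho_\Phi(f_n)\to\rho_\Phi(f)$ I would use the convex splitting $\Phi(f_n)\le\lambda\Phi(f/\lambda)+(1-\lambda)\Phi\big((f_n-f)/(1-\lambda)\big)$ for $0<\lambda<1$; integrating and letting $n\to\infty$ (the second modular vanishes because the norm of its argument does) yields $\limsup_n\rho_\Phi(f_n)\le\lambda\rho_\Phi(f/\lambda)$, while the matching lower bound $\liminf_n\rho_\Phi(f_n)\ge\rho_\Phi(f)$ comes from Fatou's lemma along an a.e.\ convergent subsequence (extracted from norm convergence) together with the continuity of $\Phi$.

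The delicate point, and the main obstacle, is letting $\lambda\to1^-$ in $\lambda\rho_\Phi(f/\lambda)$: this tends to $\rho_\Phi(f)$ by (decreasing) monotone convergence only when $\rho_\Phi(f/\lambda)$ is finite for some $\lambda<1$, i.e.\ when $(1+\delta)f$ belongs to the modular space for some $\delta>0$. This holds automatically under the $\Delta_2$ condition but may fail otherwise, which is exactly why the full equivalence (the converse) is asserted only for $\Delta_2$-regular $\Phi$. For that converse I would argue that $\Delta_2$ makes modular convergence and norm convergence coincide: iterating the growth estimate $\Phi(2x)\le K\Phi(x)$ lets one bound $\rho_\Phi\big(\lambda(f_n-f)\big)$ by a fixed multiple of $\rho_\Phi(f_n-f)$ for any prescribed $\lambda>0$, so that vanishing of the modular propagates to all dilates and hence to the norm.

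For part (b) the engine is the generalized dominated convergence theorem in its Pratt--Fatou form. Assuming $\Phi\in\Delta_2$, $\rho_\Phi(f_n)\to\rho_\Phi(f)$, and $f_n\to f$ a.e., I would set $g_n=\Phi\big((f_n-f)/2\big)$ and dominate it via convexity by $h_n=\tfrac12\Phi(f_n)+\tfrac12\Phi(f)$. Then $0\le g_n\le h_n$, $g_n\to0$ a.e., $h_n\to\Phi(f)$ a.e., and $\int_X h_n\,d\mu\to\rho_\Phi(f)=\int_X\Phi(f)\,d\mu<\infty$, so the generalized dominated convergence theorem gives $\rho_\Phi\big((f_n-f)/2\big)\to0$; invoking $\Delta_2$ once more converts this modular-null statement into $N_\Phi(f_n-f)\to0$. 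Convergence in $\mu$-measure is reduced to the a.e.\ case by the subsequence principle, since every subsequence then has a further a.e.\ convergent one and the limit is forced. In the continuous concave case I would replace the convexity bound by the subadditivity of $\Phi$ (valid for concave $\Phi$ with $\Phi(0)=0$ and $\Phi\nearrow$), dominating $\Phi(|f_n-f|)$ by $\Phi(|f_n|)+\Phi(|f|)$ and running the same Pratt argument, with the role formerly played by $\Delta_2$ now supplied by subadditivity.
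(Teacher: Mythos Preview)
The paper does not prove this theorem at all: note the citation \texttt{\textbackslash cite\{raor\}} attached to the theorem header. It is quoted verbatim from Rao--Ren and then used as a black box in the proof of Theorem~\ref{t2.2}. So there is no ``paper's own proof'' to compare your sketch against.

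Your argument for (b) via the Pratt/generalized dominated convergence theorem is the standard one and is fine. There is, however, a genuine gap in your treatment of the forward implication in (a), and you mislabel it. You correctly identify that passing to the limit $\lambda\to1^-$ in $\lambda\,\rho_\Phi(f/\lambda)$ requires $\rho_\Phi(f/\lambda)<\infty$ for some $\lambda<1$, but you then say ``this is exactly why the full equivalence (the converse) is asserted only for $\Delta_2$-regular $\Phi$''. That is the wrong diagnosis: the obstruction you found sits in the \emph{forward} direction, not the converse, and it is not merely technical. If $\Phi\notin\Delta_2$ one can have $f\in L^\Phi(\mu)$ with $\rho_\Phi(f)<\infty$ yet $\rho_\Phi\big((1+\varepsilon)f\big)=\infty$ for every $\varepsilon>0$; setting $f_n=(1+1/n)f$ gives $N_\Phi(f_n-f)=n^{-1}N_\Phi(f)\to0$ while $\rho_\Phi(f_n)=\infty$ for all $n$, so $\rho_\Phi(f_n)\not\to\rho_\Phi(f)$. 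Thus the forward implication, as the paper states it, already needs an extra hypothesis (either $\Delta_2$, or that $f$ lies in the closure of simple functions, or that one works in $[0,\infty]$); your proof cannot close without one, and you should check Rao--Ren for the precise formulation rather than relying on the paper's paraphrase.
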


Throughout this paper, $(X,\mathcal{F},\mu)$ will denote a measure space, where $X$ is a set, $\mathcal{F}$ is a sigma algebra on $X$, and $\mu$ is a positive measure on $\mathcal{F}$. Additionally, we assume that $\varphi:X\rightarrow X$ is a non-singular measurable transformation, meaning that $\varphi^{-1}(F)\in \mathcal{F}$ for every $F\in \mathcal{F}$ and $\mu(\varphi^{-1}(F))=0$ if $\mu(F)=0$. Moreover, if there exists a positive constant $c$ such that $\mu(\varphi^{-1}(F))\leq c\mu(F)$ for every $F\in \mathcal{F}$, then the linear operator

$$C_{\varphi}:L^{\Phi}(\mu)\rightarrow L^{\Phi}(\mu), \ \  \ \ \ \ C_{\varphi}(f)=f\circ\varphi,$$

is well-defined and continuous on the Orlicz space $L^{\Phi}(\mu)$, and is called a composition operator. For more details on composition operators on Orlicz spaces, one can refer to \cite{chkm}.

\section{\sc\bf Expansivity for composition operators}
In this section we provide some equivalent conditions for expansivity, positive expansivity, uniform expansivity and uniformly positive expansivity of a composition operator on the Orlicz space $L^{\Phi}(\mu)$. Additionally we find necessary and sufficient conditions for structural stability, strong structural stability and the shadowing property of composition operators. First we recall the following result from \cite{mm}.
\begin{prop}\label{p1.1}
Let $T$ be an operator on a Banach space $X$ and let  $S_X=\{x\in X: ||x||=1\}$ be the  unite sphere of $X$. Then:\\
(a) $T$ is positively expansive if and only if $\sup_{n\in \mathbb{N}}\|T^n x\|=\infty$, for every nonzero $x\in X$.\\
(b) $T$ is uniformly positively expansive if and only if $\lim_{n\rightarrow \infty}\|T^n x\|=\infty$, uniformly on $S_X$.\\

If, in addition, $T$ is invertible, then:\\
(c) $T$ is expansive if and only if $\sup_{n\in \mathbb{Z}}\|T^n x\|=\infty$, for every nonzero $x\in X$.\\
(d) $T$ is uniformly expansive if and only if $S_X=A\cup B$, where $\lim_{n\rightarrow \infty}\|T^n x\|=\infty$, uniformly on $A$ and
$\lim_{n\rightarrow \infty}\|T^{-n}x\|=\infty$, uniformly on $B$.
\end{prop}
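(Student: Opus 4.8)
The plan is to exploit the homogeneity of $T$ together with a renormalization of orbits, disposing of the two pointwise statements (a), (c) first and the two uniform statements (b), (d) afterwards. Throughout I use the standard definitions: $T$ is positively expansive (resp. expansive) if there is $c>1$ with $\sup_{n\ge0}\|T^nx\|\ge c$ (resp. $\sup_{n\in\mathbb{Z}}\|T^nx\|\ge c$) for every $x\in S_X$, while the uniform versions ask, for some $N\in\mathbb{N}$ and $c>1$, that $\|T^Nx\|\ge c$ (resp. $\max\{\|T^Nx\|,\|T^{-N}x\|\}\ge c$) for every $x\in S_X$. Only (c) and (d) use invertibility, which is granted there.

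For (a) and (c) the implication from the infinite supremum to expansivity is immediate, since $\infty$ exceeds any $c$. For the converse I argue by contradiction. If some $x_0\in S_X$ had $S:=\sup_n\|T^nx_0\|<\infty$, I choose an index $m$ (with $m\ge0$ for (a), any $m\in\mathbb{Z}$ for (c)) such that $\|T^mx_0\|>S-\eps$, and set $y=T^mx_0/\|T^mx_0\|\in S_X$. Then $\sup_n\|T^ny\|\le S/\|T^mx_0\|<S/(S-\eps)<c$ once $\eps$ is small, because $S/(S-\eps)\to1$ while $c>1$; this contradicts expansivity, so every nonzero orbit is unbounded.

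For (b) I iterate the uniform expansion over blocks of length $N$. Applying the defining inequality to the unit vector $T^{kN}x/\|T^{kN}x\|$ gives $\|T^{(k+1)N}x\|\ge c\,\|T^{kN}x\|$, whence $\|T^{kN}x\|\ge c^{k}$ uniformly on $S_X$; for intermediate $m\in[kN,(k+1)N]$ the estimate $\|T^mx\|\ge\max(1,\|T\|)^{-N}\|T^{(k+1)N}x\|\ge\max(1,\|T\|)^{-N}c^{k+1}$ upgrades this to uniform divergence of $\|T^mx\|$ as $m\to\infty$. Conversely, uniform divergence supplies an $N$ with $\|T^Nx\|\ge2$ for all $x\in S_X$, which is the definition.

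For (d), the crux, I fix $x\in S_X$ and study the sampled sequence $r_n:=\|T^{nN}x\|>0$, $n\in\mathbb{Z}$. Applying uniform expansivity to $T^{nN}x/r_n$ yields $\max\{r_{n+1},r_{n-1}\}\ge c\,r_n$ at every $n$, so no $r_n$ is a local maximum; consequently an up-step $r_{n+1}>r_n$ can never be followed by a down-step (that would produce a local maximum at $n+1$), so after any up-step all steps are up, with ratio $\ge c$ forced at each such index. Inspecting $n=0$ then gives a dichotomy: either $r_1\ge c\,r_0$, forcing $r_{n+1}\ge c\,r_n$ for all $n\ge0$ and hence $r_n\ge c^{n}$; or $r_{-1}\ge c\,r_0$, forcing symmetrically $r_{-n}\ge c^{n}$. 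Taking $A$ and $B$ to be the corresponding subsets of $S_X$ yields $S_X=A\cup B$ with $\|T^{nN}x\|\ge c^{n}$ uniformly on $A$ and $\|T^{-nN}x\|\ge c^{n}$ uniformly on $B$, and the between-sample estimate of (b) upgrades these to uniform divergence of $\|T^nx\|$ on $A$ and of $\|T^{-n}x\|$ on $B$; the converse is immediate as before. I expect the real work to sit precisely in this sampled-sequence lemma (the passage from the local-maximum-free condition to the monotone/$V$-shaped dichotomy) and in verifying the between-sample uniformity, which is where I would concentrate the effort.
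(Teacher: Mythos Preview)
The paper does not give a proof of this proposition at all: it is introduced with the sentence ``First we recall the following result from \cite{mm}'' and is simply quoted without argument. Consequently there is no in-paper proof to compare your attempt against.

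For what it is worth, your argument is correct and is essentially the standard one. The renormalization trick for (a) and (c) (pushing to a near-supremal index and rescaling) is exactly how one shows that a finite orbit-sup forces a unit vector with orbit-sup arbitrarily close to~$1$; the block iteration $\|T^{(k+1)N}x\|\ge c\,\|T^{kN}x\|$ together with the between-sample bound $\|T^mx\|\ge\max(1,\|T\|)^{-N}\|T^{(k+1)N}x\|$ handles (b); and the sampled-sequence analysis for (d)---no local maxima, hence after any up-step all steps are up with ratio at least $c$, giving the $A/B$ dichotomy at $n=0$---is the clean way to obtain the splitting. One minor remark on (d): in the converse you need a \emph{common} $N$ for both $A$ and $B$; this is fine since uniform divergence on each piece lets you take the larger of the two thresholds.
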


Let $C_{\varphi}:L^{\Phi}(\mu)\rightarrow L^{\Phi}(\mu)$ be a composition operator and let $\mathcal{F}^+=\{F\in \mathcal{F}: 0<\mu(F)<\infty\}$. Here we rewrite the definition of composition dynamical system for composition operators on Orlicz spaces $L^{\Phi}(\mu)$.
\begin{defn}
A composition dynamical system is a quintuple $(X,\mathcal{F}, \mu, \varphi, C_{\varphi})$ where
\begin{enumerate}
\item $(X,\mathcal{F}, \mu)$ is a $\sigma$-finite measure space,
\item $\varphi:X\rightarrow X$ is an injective bimeasurable transformation, i.e., $\varphi(F)\in \mathcal{F}$ and $\varphi^{-1}(F)\in \mathcal{F}$ for
every $F\in\mathcal{F}$,
 \item There is $c>0$ such that
 \begin{equation}\label{ec}
 \mu(\varphi^{-1}(A)\leq c\mu(A) \ \ \ \ \ \ \text{for every} \ \ A\in\mathcal{F},
  \end{equation}

 \item $C_{\varphi}:L^{\Phi}(\mu)\rightarrow L^{\Phi}(\mu)$ is the composition operator induced by $\varphi$, defined as
  $$C_{\varphi}f=f\circ \varphi.$$
\end{enumerate}
It is known that the condition \ref{ec} is a necessary and sufficient condition for boundeness of $C_{\varphi}$ on the Orlicz space $L^{\Phi}(\mu)$. Additionally, if $\varphi$ is bijective and $\varphi^{-1}$ satisfies the condition \ref{ec} then $C_{\varphi^{-1}}$ is a bounded linear operators and $C^{-1}_{\varphi}=C_{\varphi^{-1}}$. For more details on composition operators we refer interested readers to \cite{sm}.
\end{defn}

Furthermore by the fact that for every $F\in\mathcal{F}$ with $0<\mu(F)<\infty$, we have $\|\chi_{F}\|_p=(\int_X|\chi_F|^pd\mu)^{\frac{1}{p}}=\mu(F)^{\frac{1}{p}}$ and $N_{\Phi}(\chi_F)=\frac{1}{\Phi^{-1}(\frac{1}{\mu(F)})}$, the definition of a wandering set and dissipative system of bounded distortion in $L^p(\mu)$-spaces were defined in \cite{ddm}. Here we rewrite these definitions in the context of Orlicz spaces.

\begin{defn}
Let $(X, \mathcal{F}, \mu)$ be a measure space and let $\varphi:X \rightarrow X$ be an invertible non-singular transformation. A measurable set $W\subseteq X$ is called a wandering set for $\varphi$ if the sets $\{\varphi^{-n}(W)\}_{n\in\mathbb{Z}}$ are pair-wise disjoint.
\end{defn}

\begin{defn}
Let $(X, \mathcal{F}, \mu)$ be a measure space and let $\varphi:X\rightarrow X$ be an invertible non-singular transformation. The quadruple $(X, \mathcal{F}, \mu, \varphi)$ is called
\begin{itemize}
  \item A dissipative system generated by $W$, if $X=\dot{\cup}_{k\in \mathbb{Z}}\varphi^k(W)$ for some $W\in \mathcal{F}$ with $0<\mu(W)<\infty$ (the symbol $\dot{\cup}$ denotes pairwise disjoint union);
  \item A dissipative system, of bounded distortion, generated by $W$, if there exists $K>0$, such that
  \begin{equation}\label{edis}
  \frac{1}{K}\frac{N_{\Phi}(C^k_{\varphi}(\chi_W))}{N_{\Phi}(\chi_W)}\leq \frac{N_{\Phi}(C^k_{\varphi}(\chi_F))}{N_{\Phi}(\chi_F)}\leq K\frac{N_{\Phi}(C^k_{\varphi}(\chi_W))}{N_{\Phi}(\chi_W)},
  \end{equation}
  for all $k\in \mathbb{Z}$ and $F\in \mathcal{F}_W=\{F\cap W: F\in \mathcal{F}\}$. If we replace $N_{\Phi}$ by the norm of $L_p$, then we will have the bounded distortion property in $L_p$-spaces.(Definition 2.6.4, \cite{ddm})
 \end{itemize}
\end{defn}
\begin{defn}
A composition dynamical system $(X, \mathcal{F}, \mu, \varphi, C_{\varphi})$ is called
\begin{itemize}
\item dissipative composition dynamical system, generated by $W$, if $(X, \mathcal{F}, \mu, \varphi)$ is a dissipative system generated by $W$;
\item dissipative composition dynamical system, of bounded distortion, generated by $W$, if $(X, \mathcal{F}, \mu, \varphi)$ is a dissipative system of bounded distortion, generated by $W$.
\end{itemize}

\end{defn}
Based on the above definitions we have the following Proposition similar to the same result for $L^P(\mu)$-spaces in \cite{eb}.
\begin{prop}\label{p2.6}
Let $(X, \mathcal{F}, \mu, \varphi)$ be a dissipative system of bounded distortion, generated by $W$. Then there exists $H>0$ such that
\begin{equation}\label{gedis}
\frac{1}{H}\frac{N_{\Phi}(C^{t+s}_{\varphi}(\chi_W))}{N_{\Phi}(C^s_{\varphi}(\chi_W))}\leq \frac{N_{\Phi}(C^{t+s}_{\varphi}(\chi_F))}{N_{\Phi}(C^s_{\varphi}(\chi_F))}\leq H\frac{N_{\Phi}(C^{t+s}_{\varphi}(\chi_W))}{N_{\Phi}(C^s_{\varphi(\chi_W)})},
\end{equation}
for all $F\in \mathcal{F}_W$ with $\mu(F)>0$ and all $s,t\in \mathbb{Z}$.
\end{prop}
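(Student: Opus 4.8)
The plan is to derive \eqref{gedis} directly from the bounded distortion hypothesis \eqref{edis} by a simple insertion-and-cancellation argument, producing the explicit constant $H=K^2$. First I would record the two preliminary facts that make every ratio in the statement meaningful. Since $C_{\varphi}^k(\chi_F)=\chi_F\circ\varphi^k=\chi_{\varphi^{-k}(F)}$, and since $\varphi$ is injective and bimeasurable with both $\varphi$ and $\varphi^{-1}$ non-singular, we have $\mu(\varphi^{-k}(F))>0$ whenever $\mu(F)>0$; moreover, the very assumption that \eqref{edis} holds for all $k\in\mathbb Z$ guarantees that the quantities $N_\Phi(C_\varphi^k(\chi_F))$ and $N_\Phi(C_\varphi^k(\chi_W))$ are all finite and strictly positive. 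Hence for every $F\in\mathcal F_W$ with $\mu(F)>0$ and every $s\in\mathbb Z$ the denominators $N_\Phi(C_\varphi^s(\chi_F))$ and $N_\Phi(C_\varphi^s(\chi_W))$ are nonzero, so all the quotients below are legitimate.

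Next, fixing $s,t\in\mathbb Z$ and $F\in\mathcal F_W$ with $\mu(F)>0$, I would rewrite the central quotient by inserting the factor $N_\Phi(\chi_F)$:
$$\frac{N_\Phi(C_\varphi^{t+s}(\chi_F))}{N_\Phi(C_\varphi^{s}(\chi_F))}=\frac{N_\Phi(C_\varphi^{t+s}(\chi_F))/N_\Phi(\chi_F)}{N_\Phi(C_\varphi^{s}(\chi_F))/N_\Phi(\chi_F)}.$$
Now apply \eqref{edis} twice: once with $k=t+s$ to control the numerator and once with $k=s$ to control the denominator. For the upper bound in \eqref{gedis}, bound the numerator ratio from above by $K\,N_\Phi(C_\varphi^{t+s}(\chi_W))/N_\Phi(\chi_W)$ and the denominator ratio from below by $K^{-1}N_\Phi(C_\varphi^{s}(\chi_W))/N_\Phi(\chi_W)$; upon dividing, the factors $N_\Phi(\chi_F)$ and $N_\Phi(\chi_W)$ cancel and leave $K^2\,N_\Phi(C_\varphi^{t+s}(\chi_W))/N_\Phi(C_\varphi^{s}(\chi_W))$. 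The lower bound is obtained symmetrically, reversing the roles of the upper and lower estimates of \eqref{edis}. This yields \eqref{gedis} with $H=K^2$, a constant independent of $s$, $t$ and $F$, as required.

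Since the argument is purely algebraic once \eqref{edis} is in hand, there is no serious analytic obstacle; the only point requiring care is the bookkeeping that ensures all quotients are well defined. Concretely, I would verify that the single constant $K$ from \eqref{edis} applies uniformly over all $k\in\mathbb Z$ (including negative $k$, which is precisely where bimeasurability and two-sided non-singularity of $\varphi$ enter), so that invoking the hypothesis at the two indices $t+s$ and $s$ uses one and the same $K$. Everything else reduces to the cancellation displayed above.
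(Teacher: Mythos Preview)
Your argument is correct and is exactly the standard route: apply \eqref{edis} at the two exponents $t+s$ and $s$, divide, and cancel the common factors $N_\Phi(\chi_F)$ and $N_\Phi(\chi_W)$ to obtain the constant $H=K^2$. The paper does not actually supply a proof of Proposition~\ref{p2.6}; it merely states the result and refers the reader to the analogous $L^p$ statement in \cite{eb}, so your write-up already provides more detail than what appears here.
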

Now in the next Theorem we obtain equivalent conditions for composition operators to be expansive, positively expansive and uniformly positively expansive.

\begin{thm}\label{t2.2}
Let $(X, \mathcal{F}, \mu, \varphi, C_{\varphi})$ be a composition dynamical system. The followings hold.\\
\begin{enumerate}
  \item The composition operator $C_{\varphi}$ is positively expansive if and only if for every $A\in \mathcal{F}$, with $0< \mu(A)<\infty$, we have
  $$\inf_{n\in \mathbb{N}}\Phi^{-1}(\frac{1}{\mu(\varphi^{-n}(A))})=0.$$\\
  \item The composition operator $C_{\varphi}$ is expansive if and only if for every $A\in \mathcal{F}$, with $0<\mu(A)<\infty$, we have
  $$\inf_{n\in \mathcal{Z}}\Phi^{-1}(\frac{1}{\mu(\varphi^{-n}(A))})=0.$$\\
  \item If $\Phi$ is $\Delta_2$-regular, then the composition operator $C_{\varphi}$ is uniformly positively expansive if and only if
  $$\lim_{n\rightarrow \infty}\frac{\Phi^{-1}(\frac{1}{\mu(A)})}{\Phi^{-1}(\frac{1}{\mu(\varphi^{-n}(A))})}=\infty,$$
  for all $A\in \mathcal{F}$, with $0<\mu(A)<\infty$.\\
  \item If $\Phi$ is $\Delta_2$-regular, then the composition operator is uniformly expansive if and only if $\mathcal{F}^+$ can be splitted as  $\mathcal{F}^+=\mathcal{F}^+_A\cup \mathcal{F}^+_B$ such that
      $$\lim_{n\rightarrow \infty}\frac{\Phi^{-1}(\frac{1}{\mu(F)})}{\Phi^{-1}(\frac{1}{\mu(\varphi^{n}(F))})}=\infty, \ \ \ \text{uniformly on } \ \mathcal{F}^+_A,$$
       $$\lim_{n\rightarrow \infty}\frac{\Phi^{-1}(\frac{1}{\mu(F)})}{\Phi^{-1}(\frac{1}{\mu(\varphi^{-n}(F))})}=\infty, \ \ \ \text{uniformly on } \ \mathcal{F}^+_B.$$

\end{enumerate}
\end{thm}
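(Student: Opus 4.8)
The plan is to reduce everything to the key computation that $N_\Phi(C^n_\varphi(\chi_A)) = N_\Phi(\chi_{\varphi^{-n}(A)})$, so that the norms of orbits of characteristic functions are controlled entirely by the formula $N_\Phi(\chi_F) = 1/\Phi^{-1}(1/\mu(F))$. Indeed, since $C_\varphi(\chi_A) = \chi_A \circ \varphi = \chi_{\varphi^{-1}(A)}$, iterating gives $C^n_\varphi(\chi_A) = \chi_{\varphi^{-n}(A)}$, and hence
\begin{equation*}
N_\Phi(C^n_\varphi(\chi_A)) = \frac{1}{\Phi^{-1}\!\left(\tfrac{1}{\mu(\varphi^{-n}(A))}\right)}.
\end{equation*}
This identity is the engine that converts the operator-theoretic criteria of Proposition~\ref{p1.1} into the measure-theoretic conditions in the statement. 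I would establish this observation first and use it throughout.

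For part (1), I would apply Proposition~\ref{p1.1}(a): positive expansivity is equivalent to $\sup_{n}\|C^n_\varphi x\| = \infty$ for every nonzero $x$. The forward direction tests the criterion on $x = \chi_A$ and reads off, via the identity above, that $\sup_n N_\Phi(\chi_{\varphi^{-n}(A)}) = \infty$, which by the gauge-norm formula is exactly $\inf_n \Phi^{-1}(1/\mu(\varphi^{-n}(A))) = 0$. For the converse, the work is to pass from characteristic functions to arbitrary nonzero $x \in L^\Phi(\mu)$; the standard trick is to choose $A$ of positive finite measure where $|x| \geq \delta$ for some $\delta > 0$ (possible by $\sigma$-finiteness and $x \neq 0$), note $|C^n_\varphi x| \geq \delta\, \chi_{\varphi^{-n}(A)}$ pointwise, and invoke monotonicity of the norm. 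Part (2) is identical modulo replacing $\sup_{n\in\mathbb{N}}$ by $\sup_{n\in\mathbb{Z}}$ and invoking Proposition~\ref{p1.1}(c), using invertibility of $C_\varphi$ so that $C^{-n}_\varphi = C_{\varphi^{-n}}$ behaves the same way. Note the mild notational slip ``$n\in\mathcal{Z}$'' in the statement should read $n\in\mathbb{Z}$.

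For parts (3) and (4), I would use Proposition~\ref{p1.1}(b),(d), which demand uniform (not merely pointwise) divergence of $\|C^n_\varphi x\|$ over the unit sphere $S_X$. Here the $\Delta_2$-hypothesis enters, and this is where I expect the main obstacle. The uniformity is stated in terms of the ratio $\Phi^{-1}(1/\mu(A))/\Phi^{-1}(1/\mu(\varphi^{-n}(A)))$, which is precisely $N_\Phi(C^n_\varphi(\chi_A))/N_\Phi(\chi_A)$; the difficulty is that uniformity over the unit sphere of $L^\Phi(\mu)$ must be transferred to uniformity over the family $\mathcal{F}^+$ of sets (equivalently, over normalized characteristic functions $\chi_A/N_\Phi(\chi_A)$). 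The subtle point is that the unit sphere is far larger than the set of normalized indicators, so one must argue that testing on indicators suffices. The role of $\Delta_2$ is to guarantee that norm convergence is equivalent to modular ($\rho_\Phi$) convergence via Theorem~\ref{t1.1}, allowing a general unit vector to be approximated or dominated by simple functions without losing uniform control; concretely, I would approximate $x\in S_X$ from below by a scaled indicator of a level set and then apply the uniform estimate on indicators. Part (4) then splits $\mathcal{F}^+$ into the pieces $\mathcal{F}^+_A,\mathcal{F}^+_B$ corresponding to the decomposition $S_X = A \cup B$ of Proposition~\ref{p1.1}(d), with the forward-orbit condition governing $\mathcal{F}^+_A$ and the backward-orbit condition (using $C^{-n}_\varphi$ and hence $\varphi^{-n}$) governing $\mathcal{F}^+_B$. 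The crux in both parts is making the reduction from arbitrary unit vectors to indicator functions uniform, and $\Delta_2$-regularity is exactly the tool that keeps the modular and norm topologies comparable enough for this to go through.
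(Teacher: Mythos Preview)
Your plan for parts (1) and (2) is exactly the paper's argument: test expansivity on $\chi_A$ via the identity $N_\Phi(C_\varphi^n\chi_A)=1/\Phi^{-1}(1/\mu(\varphi^{-n}(A)))$ for the forward direction, and for the converse dominate an arbitrary nonzero $f$ from below by $\delta\,\chi_{F_\delta}$ on a level set of positive finite measure. Nothing to add there.

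For the converses of (3) and (4) your concrete proposal---``approximate $x\in S_X$ from below by a scaled indicator of a level set and then apply the uniform estimate on indicators''---has a real gap. That manoeuvre gives $N_\Phi(C_\varphi^n f)\ge \delta\,N_\Phi(\chi_A)\cdot\dfrac{\Phi^{-1}(1/\mu(A))}{\Phi^{-1}(1/\mu(\varphi^{-n}(A)))}$, so to get divergence \emph{uniformly} over $S_{L^\Phi}$ you would need $\delta\,N_\Phi(\chi_A)$ bounded below by a universal constant independent of $f$. That fails already in $L^p$: for $f$ a normalized sum of many thin tall indicators, no single level set captures a fixed fraction of the norm. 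The lower-bound-by-one-indicator trick is precisely what makes (1)--(2) work pointwise but is too weak for the uniform statements.

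The paper supplies the missing ingredient: a disjoint-support lower bound. For $f,g$ with $S(f)\cap S(g)=\emptyset$ one has $\|f+g\|_\Phi\ge c(\|f\|_\Phi+\|g\|_\Phi)$ (indeed $c=\tfrac12$ works, since $|f+g|=|f|+|g|$ dominates both $|f|$ and $|g|$). Applying this to a simple $f=\sum_{i=1}^m\alpha_i\chi_{F_i}\in S_{L^\Phi}$ with the $F_i$ disjoint yields, for $n\ge N$,
\[
N_\Phi(C_\varphi^n f)\;\ge\;\frac{c}{2}\sum_i|\alpha_i|\,N_\Phi(\chi_{\varphi^{-n}(F_i)})\;\ge\;\frac{cM}{2}\sum_i|\alpha_i|\,N_\Phi(\chi_{F_i})\;\ge\;\frac{cM}{2}\,N_\Phi(f)=\frac{cM}{2},
\]
which is uniform in $f$. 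The passage to general $f\in S_{L^\Phi}$ then goes by norm-density of simple functions (this is where $\Delta_2$ is used, via Theorem~\ref{t1.1}). For part (4) the same disjoint-support inequality is used twice: first to split a simple $f$ into its $\mathcal F_A^+$- and $\mathcal F_B^+$-parts and argue that one of them has gauge norm at least $\tfrac12$, and then to run the above estimate on that part. Your outline correctly anticipated that $\Delta_2$ and simple-function approximation are the tools, but the step you are missing is this additive lower bound for disjointly supported pieces; without it the reduction from $S_{L^\Phi}$ to normalized indicators cannot be made uniform.
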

\begin{proof}
First we prove (2). Let $C_{\varphi}$ be expansive. Then by part (c) of the Proposition \ref{p1.1} we have
$$\sup_{n\in \mathbb{Z}}N_{\Phi}(C^n_{\varphi}f)=\sup_{n\in \mathbb{Z}}N_{\Phi}(C_{\varphi^n}f)=\infty,  \ \ \ \ \ \text{for each} \ \ \ f\in L^{\Phi}(\mu)\setminus \{0\}.$$
Now if $F\in \mathcal{F}$ with $0<\mu(F)<\infty$, then by setting $f=\chi_F$ and for every $n\in \mathcal{Z}$ we have
$$N_{\Phi}(C^n_{\varphi}f)=N_{\Phi}(\chi_{\varphi^{-n}(F)})=\frac{1}{\Phi^{-1}(\frac{1}{\mu(\varphi^{-n}(F))})}.$$
So we have
$$\sup_{n\in \mathbb{Z}}N_{\Phi}(C^n_{\varphi}f)=\sup_{n\in\mathbb{Z}}\frac{1}{\Phi^{-1}(\frac{1}{\mu(\varphi^{-n}(F))})}=\infty$$
and therefore
$$\inf_{n\in\mathbb{Z}}\Phi^{-1}(\frac{1}{\mu(\varphi^{-n}(F))})=0.$$
Now we prove the converse. Suppose that for each $F\in \mathcal{F}$, with $0<\mu(F)<\infty$, we have $\inf_{n\in \mathbb{Z}}\Phi^{-1}(\frac{1}{\mu(\varphi^{-n}(F))})=0$. If $f\in L^{\Phi}(\mu)\setminus \{0\}$, then there exists a positive constant $\delta$ such that the set $F_{\delta}=\{x\in X: |f(x)|>\delta\}$ has a positive measure, i.e, $\mu(F_{\delta})>0$. Since $(X, \mathcal{F}, \mu)$ is $\sigma$-finite, then without loss of generality we can assume that $\mu(F_{\delta})<\infty$. So for every $n\in \mathbb{Z}$:
\begin{align*}
\int_X\Phi(\frac{\delta.\chi_{F_{\delta}}\circ \varphi^n}{N_{\Phi}(C^n_{\varphi})})d\mu&\leq \int_{\varphi^{-n}(F_{\delta})}\Phi(\frac{f\circ \varphi^n}{N_{\Phi}(C^n_{\varphi})})d\mu\\
&\leq \int_{X}\Phi(\frac{f\circ \varphi^n}{N_{\Phi}(C^n_{\varphi})})d\mu\leq1.
\end{align*}
Hence $N_{\Phi}(\delta.\chi_{F_{\delta}})\leq N_{\Phi}(C^n_{\varphi}f)$ and so
$$\delta. \frac{1}{\Phi^{-1}(\frac{1}{\mu(\varphi^{-n}(F_{\delta}))})}\leq N_{\Phi}(C^n_{\varphi}f).$$
This inequality implies that if
 $$\inf_{n\in \mathbb{Z}}\Phi^{-1}(\frac{1}{\mu(\varphi^{-n}(F_{\delta}))})=0,$$
  then $\sup_{n\in\mathbb{Z}}N_{\Phi}(C^n_{\varphi}f)=\infty$. Since $f$ is chosen arbitrarily, then by part (c) of the Proposition \ref{p1.1} we get that $C_{\varphi}$ is expansive on $L^{\Phi}(\mu)$.\\
  For the proof of part (1) it suffices just to replace $\mathbb{Z}$ by $\mathbb{N}$.\\
  (3). Let $C_{\varphi}$ be uniformly positively expansive. Then by Proposition \ref{p1.1} we have
  $$\lim_{n\rightarrow \infty}N_{\Phi}(C^n_{\varphi}f)=\infty, \ \ \ \ \text{uniformly on} \ \ \ \ \  S_{L^{\Phi}(\mu)},$$
  in which $S_{L^{\Phi}(\mu)}=\{f\in L^{\Phi}(\mu): N_{\Phi}(f)=1\}$.
Let $f=\Phi^{-1}(\frac{1}{\mu(F)})\chi_{F}$, for each $F\in \mathcal{F}^+$. Then we have $N_{\Phi}(f)=1$ and also\\
$$N_{\Phi}(C^n_{\varphi}f)=\Phi^{-1}(\frac{1}{\mu(F)})N_{\Phi}(\chi_{\varphi^{-n}(F)})=\frac{\Phi^{-1}(\frac{1}{\mu(F)})}{\Phi^{-1}(\frac{1}{\mu(\varphi^{-n}(F))})}.$$
So by our assumptions we get that
$$\lim_{n\rightarrow \infty}\frac{\Phi^{-1}(\frac{1}{\mu(F)})}{\Phi^{-1}(\frac{1}{\mu(\varphi^{-n}(F))})}=\infty, \ \ \ \ \ \text{uniformly on the sets} \ \ \ F\in \mathcal{F}^+.$$
Conversely, since $\Phi\in \Delta_2$, then the simple functions are dense in $L^{\Phi}(\mu)$\cite{raor}. So it suffices to prove that for any simple function $f\in S_{L^{\Phi}(\mu)}$, the equality $$\lim_{n\rightarrow \infty}N_{\Phi}(C^n_{\varphi}F)=\infty$$ holds. By our assumptions, for each $M>0$ there exists $N\in \mathbb{N}$ such that for every $F\in \mathcal{F}^+$, we have
$$\frac{\Phi^{-1}(\frac{1}{\mu(F)})}{\Phi^{-1}(\frac{1}{\mu(\varphi^{-n}(F))})}>M, \ \ \ \ \ \ \text{for each} \ \ \  n\geq N.$$
By equation \ref{e1}, it is easy to see that for $f,g\in L^{\Phi}(\mu)$, with $S(f)\cap S(g)\neq \emptyset$, there exists $c>0$ such that
 $$\|f+g\|_{\Phi}\geq c(\|f\|_{\Phi}+\|g\|_{\Phi}).$$
 Now for every simple function $f\in S_{L^{\Phi}(\mu)}$, with $f=\Sigma^{m}_{i=1}\alpha_i\chi_{F_i}$, $\alpha_i\in \mathbb{C}\setminus \{0\}$ and $F_i\in \mathcal{F}$, without loss of generality we can assume that measurable sets $\{F_i\}^{m}_{i=1}$ are pairwise disjoint. Hence by the \ref{e2}, for any $n\geq N$, we have\\
 \begin{align*}
 N_{\Phi}(C^n_{\varphi}f)&=N_{\Phi}(\Sigma^{m}_{i=1}\alpha_i\chi_{F_i}\circ\varphi^n)\geq \frac{c}{2}\Sigma^{m}_{i=1}|\alpha_i| N_{\Phi}\chi_{\varphi^{-n}(F_i)}\\
 &=\frac{c}{2}\Sigma^{m}_{i=1}|\alpha_i|\frac{1}{\Phi^{-1}(\frac{1}{\mu(\varphi^{-n}(F_i))})}\geq \frac{cM}{2}\Sigma^{m}_{i=1}|\alpha_i| \frac{1}{\Phi^{-1}(\frac{1}{\mu(F_i)})}\\
 &=\frac{cM}{2}\Sigma^{m}_{i=1}|\alpha_i| N_{\Phi}(\chi_{F_i})\geq \frac{cM}{2}N_{\Phi}(f)=\frac{cM}{2}.
 \end{align*}

This implies that $\lim_{n\rightarrow \infty}N_{\Phi}(C^n_{\varphi}f)=\infty$. As is known, for every $f\in S_{L^{\Phi}(\mu)}$, we have a decomposition as
$f=f^+-f^-$,
in which $f^+, f^-$ are positive measurable functions. We can find a sequence of simple functions $\{f_k\}$ with $|f_k|\leq 2|f|$, such that it converges point-wisely to $f$ and so the sequence $\{\Phi(f_k)\}$ is also convergent point-wisely to $\Phi(f)$. Then by using Lebesgue Dominated Convergence Theorem we have
$$\lim_{k\rightarrow \infty} \rho_{\Phi}(f_k)=\rho_{\Phi}(f).$$
Hence by Theorem \ref{t1.1} we get that

$$\lim_{k\rightarrow \infty} N_{\Phi}(f_k)=N_{\Phi}(f)=1.$$
Therefore by combining the above observations we get that  $\lim_{n\rightarrow \infty}N_{\Phi}(C^n_{\varphi}f)=\infty$.\\
(4) Let $C_{\varphi}$ be uniformly expansive. Then by part (d) of the Proposition \ref{p1.1} we have a decomposition as $S_{L^{\Phi}(\mu)}=A\cup B$, where \\
$$\lim_{n\rightarrow \infty}N_{\Phi}(C^n_{\varphi}f)=\infty, \ \ \ \ \text{uniformly on} \ \ \ \  A$$
and
$$\lim_{n\rightarrow \infty}N_{\Phi}(C^{-n}_{\varphi}f)=\infty, \ \ \ \ \text{uniformly on} \ \ \ \  B.$$
If we put $\mathcal{F}^+=\{F\in\mathcal{F}: 0<\mu(F)<\infty\}$, then by the fact that simple functions are dense in $L^{\Phi}(\mu)$, we get: $\mathcal{F}^+=\mathcal{F}^+_A\cup \mathcal{F}^+_B$ in which
$$\mathcal{F}^+_A=\{F\in \mathcal{F}^+:\Phi^{-1}(\frac{1}{\mu(F)})\chi_{F}\in A\}\ \ \ \text{and} \ \ \ \ \mathcal{F}^+_B=\{F\in \mathcal{F}^+:\Phi^{-1}(\frac{1}{\mu(F)})\in B\}.$$
Since
$$\Phi^{-1}(\frac{1}{\mu(F)})\chi_{F}\in S_{L^{\Phi}(\mu)} \ \ \text{and} \ \ \ C^n_{\varphi}(\Phi^{-1}(\frac{1}{\mu(F)})\chi_{F})=\Phi^{-1}(\frac{1}{\mu(F)})\chi_{\varphi^{-n}(F)},$$
 then by the hypothesis we have
$$\lim_{n\rightarrow \infty}\frac{\Phi^{-1}(\frac{1}{\mu(F)}}{\Phi^{-1}(\frac{1}{\mu(\varphi^{n}(F))})}=\infty, \ \ \ \ \ \text{on} \ \ \ \mathcal{F}^+_A$$
and
$$\lim_{n\rightarrow \infty}\frac{\Phi^{-1}(\frac{1}{\mu(F)}}{\Phi^{-1}(\frac{1}{\mu(\varphi^{-n}(F))})}=\infty, \ \ \ \ \ \text{on} \ \ \ \mathcal{F}^+_B.$$
So we get the result. Now to prove the converse, we use again part (d) of the Proposition \ref{p1.1}. Hence it suffices to show that $S_{L^{\Phi}(\mu)}=A\cup B$, such that
 $$\lim_{n\rightarrow \infty}N_{\Phi}(C^n_{\varphi}f)=\infty, \ \ \ \ \text{uniformly on} \ \ \ \  A$$
and
$$\lim_{n\rightarrow \infty}N_{\Phi}(C^{-n}_{\varphi}f)=\infty, \ \ \ \ \text{uniformly on} \ \ \ \  B.$$
By the assumptions, for every $M>0$, there exists $N\in \mathbb{N}$ such that for every $n\geq N$,
$$\frac{\Phi^{-1}(\frac{1}{\mu(F)}}{\Phi^{-1}(\frac{1}{\mu(\varphi^{n}(F))})}>M\ \ \text{for each} \ F\in \mathcal{F}^+_A$$
and
$$ \frac{\Phi^{-1}(\frac{1}{\mu(F)}}{\Phi^{-1}(\frac{1}{\mu(\varphi^{-n}(F))})}>M\ \ \text{for each} \ F\in \mathcal{F}^+_B.$$
Now, let $f\in S_{L^{\Phi}(\mu)}$ be a simple function with $f=\sum_{i=1}^m\alpha_i\chi_{F_i}$, in which $F_i$'s are pairwise disjoint measurable sets with positive measure. Since $\mathcal{F}^+=\mathcal{F}^+_{\mathcal{A}}\cup \mathcal{F}^+_{\mathcal{B}}$, then we can define
$$f_{\mathcal{F}^+_{A}}=\sum_{\{1\leq i\leq m: F_i\in \mathcal{F}^+_{A}\}}\alpha_i\chi_{F_i}$$
and
$$f_{\mathcal{F}^+_{B}}=\sum_{\{1\leq i\leq m: F_i\in \mathcal{F}^+_{B}\}}\alpha_i\chi_{F_i}.$$
It is clear that $f=f_{\mathcal{F}^+_{A}}+f_{\mathcal{F}^+_{B}}$. So by equation \ref{e1} there exists $c>0$ such that
$$\|f\|_{\Phi}\geq c(\|f_{\mathcal{F}^+_{A}}\|_{\Phi}+\|f_{\mathcal{F}^+_{B}}\|_{\Phi})\geq c\|f\|_{\Phi}$$
and since $N_{\Phi}(f)=1$, then by the inequality \ref{e2} we have

$$2\geq c(\|f_{\mathcal{F}^+_{A}}\|_{\Phi}+\|f_{\mathcal{F}^+_{B}}\|_{\Phi})\geq 2c.$$
This means that
$$\|f_{\mathcal{F}^+_{A}}\|_{\Phi}\geq 1 \ \ \ \ \ \text{or} \ \ \ \ \ \|f_{\mathcal{F}^+_{B}}\|_{\Phi}\geq 1$$
and so by the inequality \ref{e2} we have
$$N_{\Phi}(f_{\mathcal{F}^+_{A}})\geq \frac{1}{2} \ \ \ \ \ \text{or} \ \ \ \ \ N_{\Phi}(f_{\mathcal{F}^+_{B}})\geq \frac{1}{2}.$$
Hence if $N_{\Phi}(f_{\mathcal{F}^+_{A}})\geq \frac{1}{2}$, then for every $n\geq N$
\begin{align*}
N_{\Phi}(C^{-n}_{\varphi}f)&\geq \frac{1}{2}\|C^{-n}_{\varphi}f\|_{\Phi}=\frac{1}{2}\|C^{-n}_{\varphi}(f_{\mathcal{F}^+_{A}}+f_{\mathcal{F}^+_{B}})\|_{\Phi}\\
&\geq \frac{c}{2}(\|C^{-n}_{\varphi}(f_{\mathcal{F}^+_{A}}\|_{\Phi}+\|f_{\mathcal{F}^+_{B}})\|_{\Phi})\geq \frac{c}{2} \|C^{-n}_{\varphi}f_{\mathcal{F}^+_{A}}\|_{\Phi}\\
&\geq \frac{c^2}{2}\sum_{\{1\leq i\leq m: F_i\in \mathcal{F}^+_{A}\}}|\alpha_i|\frac{1}{\Phi^{-1}(\frac{1}{\mu(\varphi^n(F_i))})}\\
&\geq \frac{c^2M}{2}\sum_{\{1\leq i\leq m: F_i\in \mathcal{F}^+_{A}\}}|\alpha_i|\frac{1}{\Phi^{-1}(\frac{1}{\mu(F_i)})}\\
&\geq \frac{c^2M}{2}N_{\Phi}(f_{\mathcal{F}^+_{A}})\geq \frac{c^2M}{4}.
\end{align*}
By a similar way we get that if $N_{\Phi}(f_{\mathcal{F}^+_{B}})\geq \frac{1}{2}$, then we have $N_{\Phi}(C^{n}_{\varphi}f)\geq \frac{c^2M}{4}$. If we set
$$S^0_{L^{\Phi}(\mu)}=\{f\in S_{L^{\Phi}(\mu)}: f \ \ \text{is a simple function}\},$$
then from the above observations we get that $S^0_{L^{\Phi}(\mu)}=A_0\cup B_0$, in which
$$A_0=\{f\in S^0_{L^{\Phi}(\mu)}: N_{\Phi}(f_{\mathcal{F}^+_{A}})\geq \frac{1}{2}\} \ \ \ \ \text{and} \ \ \ \ B_0=\{f\in S^0_{L^{\Phi}(\mu)}: N_{\Phi}(f_{\mathcal{F}^+_{B}})\geq \frac{1}{2}\}.$$
Let $f\in S_{L^{\Phi}(\mu)}$ be an arbitrary element. Then we can write $f=f^{+}-f^{-}$ in which $f^{+}$ and $f^{-}$ are positive and negative parts of $f$, respectively. Hence we can find two non-decreasing sequences of simple functions $\{f^+_k\}_{k\in \mathbb{N}}$ and $\{f^+-_k\}_{k\in \mathbb{N}}$ such that they converge pointwisely to $f^+$ and $f^-$, respectively. If we define $f_k=f^+_k-f^-_k$, then we get that the sequence $\{f_k\}_{k\in \mathbb{N}}$ converges pointwisely to $f$. Since $|f_k|=|f^+_k-f^-_k|\leq 2|f|$ and so $\Phi(|f_k|)\leq \Phi(2|f|)$, then by Lebesgue Dominated Convergence Theorem we have
$$\lim_{k\rightarrow \infty} \rho_{\Phi}(f_k)=\rho_{\Phi}(f).$$
Hence by Theorem \ref{t1.1} we get that

$$\lim_{k\rightarrow \infty} N_{\Phi}(f_k)=N_{\Phi}(f)=1.$$
So there exists $k_0$ such that for every $k\geq k_0$, $N_{\Phi}(f_k)> \frac{1}{2}$. By the above observations we get that
$$N_{\Phi}(C^n_{\varphi}(\frac{f_k}{N_{\Phi}(f_k)})<4N_{\Phi}(C^n_{\varphi}(f)$$
and also at least one of the following sets must be infinite:
$$I_1(f)=\{k\in \mathbb{N}: \frac{f_k}{N_{\Phi}(f_k)}\in A_0\}; \ \ \ \ I_1(f)=\{k\in \mathbb{N}: \frac{f_k}{N_{\Phi}(f_k)}\in B_0\}.$$
Therefore, if $I_1(f)$ is infinite, then there is a subsequence
$$\{ \frac{f_{k_j}}{N_{\Phi}(f_{k_j})}\}_{j\in \mathbb{N}}\subseteq A_0$$
such that for every $M>0$, there exists $N\in \mathbb{N}$ such that for each $n\geq N$, $$N_{\Phi}(C^{-n}_{\varphi}(\frac{f_{k_j}}{N_{\Phi}(f_{k_j})})>\frac{M}{2}.$$
Thus we have
$$N_{\Phi}(C^{-n}_{\varphi}(f))>\frac{M}{8}, \ \ \ \ \ \text{for every} \ \ \ n\geq N.$$
Similarly, if $I_2(f)=\infty$, then for each $M>0$ we can find $N\in \mathbb{N}$, such that for every $n\geq N$, $N_{\Phi}(C^n_{\varphi}(f))> \frac{M}{8}$.
Finally if we set
$$A=\{f\in S_{L^{\Phi}(\mu)}:I_2(f)=\infty\} \ \ \ \text{and} \ \ \ B=\{f\in S_{L^{\Phi}(\mu)}:I_1(f)=\infty\},$$
then we have $S_{L^{\Phi}(\mu)}=A\cup B$, such that
$$\lim_{n\rightarrow \infty}N_{\Phi}(C^n_{\varphi}f)=\infty, \ \ \ \ \text{uniformly on} \ \ \ \  A$$
and
$$\lim_{n\rightarrow \infty}N_{\Phi}(C^{-n}_{\varphi}f)=\infty, \ \ \ \ \text{uniformly on} \ \ \ \  B.$$
This completes the proof.
\end{proof}

Now by using similar methods of Theorem \ref{t2.2} we have the next Theorem.

\begin{thm}\label{tt2.8} Let $(X, \mathcal{F}, \mu,\varphi, C_{\varphi})$ be a dissipative composition dynamical system of bounded distortion, generated by $W$. Then the following statements hold.
\begin{enumerate}
  \item $C_{\varphi}$ is positively expansive if and only if $\inf_{n\in\mathbb{N}}\Phi^{-1}(\frac{1}{\mu(\varphi^{-n}(W))})=0$.
  \item $C_{\varphi}$ is expansive if and only if $\inf_{n\in\mathbb{Z}}\Phi^{-1}(\frac{1}{\mu(\varphi^{n}(W))})=0$.
  \item $C_{\varphi}$ is uniformly positively expansive if and only if $$\lim_{n\rightarrow \infty}\Phi^{-1}(\frac{1}{\mu(\varphi^{-n}(W))})=0.$$
  \item $C_{\varphi}$ is uniformly expansive if and only if one of the following conditions hold:
  \begin{equation}\label{ue1}
    \lim_{n\rightarrow\infty}\inf_{k\in\mathbb{Z}}\left(\frac{\Phi^{-1}(\frac{1}{\mu(\varphi^k(W))})}{\Phi^{-1}(\frac{1}{\mu(\varphi^{k+n}(W))})}\right)=\infty,
  \end{equation}
  \begin{equation}\label{ue2}
    \lim_{n\rightarrow\infty}\inf_{k\in\mathbb{Z}}\left(\frac{\Phi^{-1}(\frac{1}{\mu(\varphi^k(W))})}{\Phi^{-1}(\frac{1}{\mu(\varphi^{k-n}(W))})}\right)=\infty,
  \end{equation}
  \begin{equation}\label{ue3}
    \lim_{n\rightarrow\infty}\inf_{k\in\mathbb{Z}}\left(\frac{\Phi^{-1}(\frac{1}{\mu(\varphi^k(W))})}{\Phi^{-1}(\frac{1}{\mu(\varphi^{k+n}(W))})}\right)=\infty \ \ \ \text{and}\ \ \
    \lim_{n\rightarrow\infty}\inf_{k\in-\mathbb{N}_0}\left(\frac{\Phi^{-1}(\frac{1}{\mu(\varphi^k(W))})}{\Phi^{-1}(\frac{1}{\mu(\varphi^{k-n}(W))})}\right)=\infty.
  \end{equation}
\end{enumerate}
\end{thm}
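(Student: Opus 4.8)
The plan is to translate all four equivalences into statements about the single doubly-indexed sequence
\[
w_k := N_{\Phi}(C^k_{\varphi}\chi_W) = \frac{1}{\Phi^{-1}\!\left(1/\mu(\varphi^{-k}(W))\right)}, \qquad k\in\mathbb{Z},
\]
and then to read off the stated conditions from Theorem \ref{t2.2} together with the dissipative structure. The decisive tool is the generalized bounded-distortion estimate of Proposition \ref{p2.6}: for every $F\in\mathcal{F}_W$ with $\mu(F)>0$ and all $s,t\in\mathbb{Z}$ the ratio $N_{\Phi}(C^{t+s}_{\varphi}\chi_F)/N_{\Phi}(C^s_{\varphi}\chi_F)$ is comparable, up to the fixed constant $H$, to $w_{t+s}/w_s$. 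Since $\chi_{\varphi^k(W)}=C^{-k}_{\varphi}\chi_W$, this identifies the growth of the $C_{\varphi}$-orbit of every translate $\varphi^k(W)$ with a shift of $(w_k)$, and via the dissipative decomposition $X=\dot{\cup}_{k}\varphi^k(W)$ it reduces the growth of the orbit of an arbitrary $A\in\mathcal{F}^+$ to that of the $w_k$'s.

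For the non-uniform parts (1) and (2) I would argue as follows. The forward implications are immediate: applying the characterizations of Theorem \ref{t2.2}(1),(2) to the distinguished set $A=W\in\mathcal{F}^+$ yields exactly $\inf_{n}\Phi^{-1}(1/\mu(\varphi^{-n}(W)))=0$ over $\mathbb{N}$, resp. $\mathbb{Z}$ (the sign in the statement of (2) being immaterial since the infimum runs over all of $\mathbb{Z}$). For the converses, take an arbitrary $A\in\mathcal{F}^+$; dissipativity furnishes some $k_0$ with $\mu(A\cap\varphi^{k_0}(W))>0$, and writing $G=\varphi^{-k_0}(A\cap\varphi^{k_0}(W))\in\mathcal{F}_W$ one gets $N_{\Phi}(C^n_{\varphi}\chi_A)\geq N_{\Phi}(C^{n-k_0}_{\varphi}\chi_G)$. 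Bounded distortion then bounds this below by a fixed multiple of $w_{n-k_0}$, so the hypothesis $\sup_n w_n=\infty$ forces $\sup_n N_{\Phi}(C^n_{\varphi}\chi_A)=\infty$, which is the condition of Theorem \ref{t2.2} for $A$. Here only unboundedness of a supremum is needed, so no uniform control is required and the argument is routine.

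The uniform parts (3) and (4) are where the real work lies, and I expect the converse directions to be the main obstacle. For the forward direction of (3) I would feed $A=W$ into Theorem \ref{t2.2}(3); for (4) I would feed the translates $A=\varphi^k(W)$ into Theorem \ref{t2.2}(4) and sort them according to whether the forward or the backward orbit blows up, producing the splitting $\mathcal{F}^+=\mathcal{F}^+_A\cup\mathcal{F}^+_B$ and, after rewriting the ratios through $(w_k)$, the alternatives \ref{ue1}, \ref{ue2}, \ref{ue3} (the index range $-\mathbb{N}_0$ in \ref{ue3} recording that the mixed splitting is anchored on the backward tail of the orbit). The converses require passing from the single-set conditions back to uniform blow-up over all of $\mathcal{F}^+$: one reduces to simple functions using $\Delta_2$-density, splits each simple function along the dissipative partition into translate-supported pieces exactly as in the proof of Theorem \ref{t2.2}(3),(4), and applies Proposition \ref{p2.6} to each piece.

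The crux — the step I expect to resist a clean write-up — is that this reduction must be uniform in the shift parameter: the estimate coming from Proposition \ref{p2.6} for a piece supported in $\varphi^{k}(W)$ involves $w_{n-k}/w_{-k}$, a $k$-dependent shift of the sequence, so extracting $\lim_n N_{\Phi}(C^n_{\varphi}f)=\infty$ uniformly over the unit sphere genuinely engages $\inf_k w_{n+k}/w_k$ rather than $w_n$ alone. Reconciling this with the single-index form in which (3) is phrased is the delicate point, and for (4) the analogous task is to verify that the three listed regimes exhaust the ways the splitting $\mathcal{F}^+=\mathcal{F}^+_A\cup\mathcal{F}^+_B$ can be realized with uniform two-sided blow-up. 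I would therefore concentrate the effort on making the per-piece bounded-distortion estimates uniform in $k$ and on checking the bookkeeping of the forward/backward splitting in (4).
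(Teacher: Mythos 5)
Your handling of parts (1) and (2), and of the forward implications of (3) and (4), is exactly the paper's argument: test Theorem \ref{t2.2} on $W$ (resp.\ on the translates $\varphi^k(W)$) for the forward directions, and for the converses use dissipativity to find $n_0$ with $\mu(F\cap\varphi^{n_0}(W))>0$, pass to $G=\varphi^{-n_0}(F\cap\varphi^{n_0}(W))\in\mathcal{F}_W$, and transfer growth from $W$ to $G$ to $F$ by bounded distortion and monotonicity of $\Phi^{-1}$. For the converse of part (4), the ``crux'' you defer is not actually delicate, and the paper resolves it precisely along the lines you sketch: the constant $H$ of Proposition \ref{p2.6} is uniform in the shift $k$ and in the piece $F_k=F\cap\varphi^k(W)$, and the hypotheses \ref{ue1}--\ref{ue3} already carry $\inf_{k\in\mathbb{Z}}$; so splitting $\chi_F=\sum_k\chi_{F_k}$, applying the disjoint-support inequality $\|f+g\|_{\Phi}\geq c(\|f\|_{\Phi}+\|g\|_{\Phi})$ (as in the proof of Theorem \ref{t2.2}) and Proposition \ref{p2.6} to each piece gives
\[
N_{\Phi}(C^n_{\varphi}\chi_F)\;\geq\;\frac{c}{2H}\,\inf_{k\in\mathbb{Z}}\frac{N_{\Phi}(C^{k+n}_{\varphi}\chi_W)}{N_{\Phi}(C^{k}_{\varphi}\chi_W)}\,N_{\Phi}(\chi_F),
\]
a bound uniform over $F\in\mathcal{F}^+$. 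There is nothing to reconcile in (4), because its stated conditions are already two-index (uniform) conditions; you should simply execute your plan, which coincides with the paper's proof.

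Part (3) is where your diagnosis matters, and there the obstruction you flag is fatal rather than merely delicate: the equivalence as stated is false, so no write-up can close the gap. As you observe, uniform positive expansivity genuinely engages $\inf_{k\in\mathbb{Z}}w_{n+k}/w_k\to\infty$ (which is exactly condition \ref{ue2}), and the single-index hypothesis $w_n\to\infty$ does not imply it. Concretely, take $X=\mathbb{Z}$, $\mu(\{j\})=e^{\sqrt{|j|}}$, $\varphi(j)=j+1$, $W=\{0\}$, $\Phi(x)=|x|^p$: this is a dissipative system of (trivially) bounded distortion with $C_{\varphi}$ and $C_{\varphi}^{-1}$ bounded, and $\Phi^{-1}(1/\mu(\varphi^{-n}(W)))=e^{-\sqrt{n}/p}\to 0$, yet the unit vectors $f_m=\chi_{\{-m\}}/\mu(\{-m\})^{1/p}$ satisfy $N_{\Phi}(C^n_{\varphi}f_m)=e^{(\sqrt{m+n}-\sqrt{m})/p}\to 1$ as $m\to\infty$ for each fixed $n$, so the blow-up is not uniform on the unit sphere and $C_{\varphi}$ is not uniformly positively expansive. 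The paper's own proof of (3) does not escape this: it establishes only pointwise divergence of the ratio for each $F$ (with constants depending on $F$ through $n_0$ and $N_{\Phi}(\chi_A)$) and then cites Theorem \ref{t2.2}(3), whose statement is pointwise but whose converse proof quantifies ``for every $F\in\mathcal{F}^+$'' uniformly; the mismatch of quantifiers is exactly the spot you identified. So your refusal to claim (3) reflects sound judgment: the correct statement of (3) is condition \ref{ue2}, after which it follows by the same argument as (4). In summary, your proposal is incomplete as written --- (4) can be finished by your own outline, but (3) cannot be proved in the stated form.
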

\begin{proof}
First we prove part (2), then for the proof of part (1) it suffices just to replace $\mathbb{Z}$ by $\mathbb{N}$.\\
In Theorem \ref{t2.2} we can replace $\mathcal{F}$ by $\mathcal{F}^+$, i.e., we can write "the composition operator $C_{\varphi}$ is expansive if and only if for every $A\in \mathcal{F}^+$,
  $$\inf_{n\in \mathcal{Z}}\Phi^{-1}(\frac{1}{\mu(\varphi^{-n}(A))})=0.$$
  Hence for the proof of (2), it suffices to prove
  $$\inf_{n\in \mathcal{Z}}\Phi^{-1}(\frac{1}{\mu(\varphi^{-n}(A))})=0, \ \ \ \forall A\in \mathcal{F}^+\ \ \Leftrightarrow \ \ \inf_{n\in\mathbb{Z}}\Phi^{-1}(\frac{1}{\mu(\varphi^{n}(W))})=0.$$
  The implication
   $$\inf_{n\in \mathcal{Z}}\Phi^{-1}(\frac{1}{\mu(\varphi^{-n}(A))})=0, \ \ \ \forall A\in \mathcal{F}^+\ \ \Rightarrow \ \ \inf_{n\in\mathbb{Z}}\Phi^{-1}(\frac{1}{\mu(\varphi^{n}(W))})=0$$
   is clear. So we prove the converse.
Let $F\in\mathcal{F}^+$. Since $X=\cup_{n\in\mathbb{Z}}\varphi^n(W)$, in which $\varphi^n(W)$'s are pair-wise disjoint and $\mu(F)>0$, then there exists $n_0\in \mathbb{Z}$ such that $\mu(F\cap\varphi^{n_0}(W))>0$. On the other hand $\varphi^{-n_0}(F\cap \varphi^{n_0}(W))=\varphi^{n_0}(F)\cap W$. Hence if we take $A=\varphi^{-n_0}(F\cap \varphi^{n_0}(W)$, then $A\in \mathcal{F}_W$ and $\mu(A)\leq \mu(W)$. Now by using bounded distortion property for
 we have
$$\Phi^{-1}(\frac{1}{\mu(\varphi^{-n}(A))})\leq \frac{KN_{\Phi}(\chi_W)}{N_{\Phi}(\chi_A)}\Phi^{-1}(\frac{1}{\mu(\varphi^{-n}(W))}), \ \ \forall n\in \mathbb{Z}$$
and therefore
$$\inf_{n\in\mathbb{Z}}\Phi^{-1}(\frac{1}{\mu(\varphi^{-n}(A))})\leq \frac{KN_{\Phi}(\chi_W)}{N_{\Phi}(\chi_A)}\inf_{n\in\mathbb{Z}}\Phi^{-1}(\frac{1}{\mu(\varphi^{-n}(W))})=0.$$
Consequently we have
$$\inf_{n\in\mathbb{Z}}\Phi^{-1}(\frac{1}{\mu(\varphi^{-n}(F))})\leq \inf_{n\in\mathbb{Z}}\Phi^{-1}(\frac{1}{\mu(\varphi^{-n}(A))})=0.$$
(3). By Theorem \ref{t2.2} it suffices to prove that
$$\lim_{n\rightarrow \infty}\frac{\Phi^{-1}(\frac{1}{\mu(F)})}{\Phi^{-1}(\frac{1}{\mu(\varphi^{-n}(F))})}=\infty, \ \ \forall F\in \mathcal{F}^+ \ \ \Leftrightarrow \ \ \lim_{n\rightarrow \infty}\Phi^{-1}(\frac{1}{\mu(\varphi^{-n}(W))})=0.$$
It is obvious that if $$\lim_{n\rightarrow \infty}\frac{\Phi^{-1}(\frac{1}{\mu(F)})}{\Phi^{-1}(\frac{1}{\mu(\varphi^{-n}(F))})}=\infty, \ \ \forall F\in \mathcal{F}^+,$$
then $\lim_{n\rightarrow \infty}\frac{1}{\Phi^{-1}(\frac{1}{\mu(\varphi^{-n}(W))})}=\infty$ and so $\lim_{n\rightarrow \infty}\Phi^{-1}(\frac{1}{\mu(\varphi^{-n}(W))})=0$. Now we prove the converse, suppose that $\lim_{n\rightarrow \infty}\Phi^{-1}(\frac{1}{\mu(\varphi^{-n}(W))})=0$ and $F\in \mathcal{F}^+$. Similar to pat (2) we can find $n_0\in \mathbb{Z}$ such that $0<\mu(F\cap \varphi^{n_0}(W))<\infty$ and
$$A=\varphi^{-n_0}(F\cap \varphi^{n_0}(W))=\varphi^{-n_0}(F)\cap W.$$

 Hence $0<\mu(A)<\infty$ and $A\in \mathcal{F}_W$. Now by using bounded distortion property we have
$$\Phi^{-1}(\frac{1}{\mu(\varphi^{-n}(A))})\leq \frac{KN_{\Phi}(\chi_W)}{N_{\Phi}(\chi_A)}\Phi^{-1}(\frac{1}{\mu(\varphi^{-n}(W))}), \ \ \ \forall n\in \mathbb{Z}$$
and so
$$\Phi^{-1}(\frac{1}{\mu(\varphi^{-n-n_0}(F))})\leq\Phi^{-1}(\frac{1}{\mu(\varphi^{-n}(A))})\leq \frac{KN_{\Phi}(\chi_W)}{N_{\Phi}(\chi_A)}\Phi^{-1}(\frac{1}{\mu(\varphi^{-n}(W))})$$
Thus by taking the limit of each sides of the above inequality, we get
 $$\lim_{n\rightarrow \infty}\Phi^{-1}(\frac{1}{\mu(\varphi^{-n-n_0}(F))})\leq \frac{KN_{\Phi}(\chi_W)}{N_{\Phi}(\chi_A)}\lim_{n\rightarrow \infty}\Phi^{-1}(\frac{1}{\mu(\varphi^{-n}(W))})=0$$
 hence $\lim_{n\rightarrow \infty}\Phi^{-1}(\frac{1}{\mu(\varphi^{-n-n_0}(F))})=0$
  and consequently
  $$\lim_{n\rightarrow \infty}\frac{\Phi^{-1}(\frac{1}{\mu(F)})}{\Phi^{-1}(\frac{1}{\mu(\varphi^{-n}(F))})}=\infty.$$
  By Theorem \ref{t2.2}, part (3) we get the proof.\\
  (4) Suppose that $C_{\varphi}$ is uniformly expansive. By Theorem \ref{t2.2} we have that
  $\mathcal{F}^+=\mathcal{F}^+_A\cup \mathcal{F}^+_B$ such that
      $$\lim_{n\rightarrow \infty}\frac{\Phi^{-1}(\frac{1}{\mu(F)})}{\Phi^{-1}(\frac{1}{\mu(\varphi^{n}(F))})}=\infty, \ \ \ \text{uniformly on } \ \mathcal{F}^+_A,$$
       $$\lim_{n\rightarrow \infty}\frac{\Phi^{-1}(\frac{1}{\mu(F)})}{\Phi^{-1}(\frac{1}{\mu(\varphi^{-n}(F))})}=\infty, \ \ \ \text{uniformly on } \ \mathcal{F}^+_B.$$
       If we set $$I=\{k\in \mathbb{Z}: \varphi^k(W)\in \mathcal{F}_A\}\ \ \ \text{and} \ \ \ J=\{k\in \mathbb{Z}: \varphi^k(W)\in \mathcal{F}_B\},$$
       then by our assumptions we get that
       $$\lim_{n\rightarrow\infty}\inf_{k\in I}\left(\frac{\Phi^{-1}(\frac{1}{\mu(\varphi^k(W))})}{\Phi^{-1}(\frac{1}{\mu(\varphi^{k+n}(W))})}\right)=\infty$$

  $$\lim_{n\rightarrow\infty}\inf_{k\in J}\left(\frac{\Phi^{-1}(\frac{1}{\mu(\varphi^k(W))})}{\Phi^{-1}(\frac{1}{\mu(\varphi^{k-n}(W))})}\right)=\infty.$$
  By these observations, if $J=\emptyset$, then the condition \ref{ue1} holds, if $I=\emptyset$, then the condition \ref{ue2} holds and if $I$ and $J$ are both non-empty, then there exist $i,j\in \mathbb{Z}$ such that $[i,\infty)\cap\mathbb{Z}\subseteq I$ and $(-\infty,j]\cap\mathbb{Z}\subseteq J$, this implies that the condition \ref{ue3} holds.\\
  For the converse, for every $F\in \mathcal{F}^+$, by our assumptions we have $F=\cup_{k\in \mathbb{Z}}F_k$, where $F_k=F\cap \varphi^k(W)$ and $F_k$'s are pair-wise disjoint and also by Proposition \ref{p2.6}, for every $n, k\in \mathbb{Z}$ there exists $H>0$ such that
  $$\frac{1}{H}\frac{N_{\Phi}(C^{k+n}_{\varphi}(\chi_W))}{N_{\Phi}(C^k_\varphi(\chi_W))}N_{\Phi}(\chi_{F_k})\leq N_{\Phi}(C^n_{\varphi}(\chi_{F_k}))\leq H\frac{N_{\Phi}(C^{k+n}_{\varphi}(\chi_W))}{N_{\Phi}(C^k_\varphi(\chi_W))}N_{\Phi}(\chi_{F_k}).$$
  Also, we have $0<\mu(F)<\infty$. Hence there exists $k\in \mathbb{Z}$ such that $\mu(F_k)>0$. Therefore , for each $n\in \mathbb{Z}$,

  \begin{align*}
  N_{\Phi}(C^n_{\varphi}(\chi_F))&\geq \frac{1}{2}\|C^n_{\varphi}(\chi_F)\|_{\Phi}=\frac{1}{2}\|\chi_{\varphi^{-n}(F)}\|_{\Phi}\\
  &\geq\frac{c}{2}\sum_{k\in \mathbb{Z}}\|\chi_{\varphi^{-n}(F_k)}\|_{\Phi}\geq\frac{c}{2}\sum_{k\in \mathbb{Z}}N_{\Phi}(\chi_{\varphi^{-n}(F_k)})\\
  &=\frac{c}{2}\sum_{k\in \mathbb{Z}}N_{\Phi}(C^n_{\varphi}(\chi_{F_k}))\geq\frac{c}{2H}\sum_{k\in \mathbb{Z}}\frac{N_{\Phi}(C^{k+n}_{\varphi}(\chi_W))}{N_{\Phi}(C^k_\varphi(\chi_W))}N_{\Phi}(\chi_{F_k})\\
  &\geq\frac{c}{2H}\inf_{k\in\mathbb{Z}}\frac{N_{\Phi}(C^{k+n}_{\varphi}(\chi_W))}{N_{\Phi}(C^k_\varphi(\chi_W))}\sum_{k\in \mathbb{Z}}N_{\Phi}(\chi_{F_k})\geq\frac{c}{2H}\inf_{k\in\mathbb{Z}}\frac{N_{\Phi}(C^{k+n}_{\varphi}(\chi_W))}{N_{\Phi}(C^k_\varphi(\chi_W))}N_{\Phi}(\chi_{F}).
    \end{align*}
By these observations we get that
$$\frac{\Phi^{-1}(\frac{1}{\mu(F)})}{\Phi^{-1}(\frac{1}{\mu(\varphi^{n}(F))})}\geq\frac{c}{2H}\inf_{k\in\mathbb{Z}}\frac{\Phi^{-1}(\frac{1}{\mu(\varphi^{k}(W))})}{\Phi^{-1}(\frac{1}{\mu(\varphi^{k+n}(W))})}.$$

This inequality implies that if we set
$$\mathcal{F}^+_A=\{F\cap(\cup_{k\geq0}\varphi^k(W)): F\in \mathcal{F}^+\}$$
and
$$\mathcal{F}^+_B=\{F\cap(\cup_{k<0}\varphi^k(W)): F\in \mathcal{F}^+\},$$
then $\mathcal{F}^+=\mathcal{F}^+_A\cup\mathcal{F}^+_B$ and also if one of conditions \ref{ue1}, \ref{ue2} or \ref{ue3} holds, then we get that

$$\lim_{n\rightarrow \infty}\frac{\Phi^{-1}(\frac{1}{\mu(F)})}{\Phi^{-1}(\frac{1}{\mu(\varphi^{n}(F))})}=\infty, \ \ \ \text{uniformly on } \ \mathcal{F}^+_A,$$

$$\lim_{n\rightarrow \infty}\frac{\Phi^{-1}(\frac{1}{\mu(F)})}{\Phi^{-1}(\frac{1}{\mu(\varphi^{-n}(F))})}=\infty, \ \ \ \text{uniformly on } \ \mathcal{F}^+_B.$$
Thus by Theorem \ref{t2.2} the composition operator $C_{\varphi}$ is uniformly expansive.
\end{proof}

In the next Proposition we obtain some sufficient conditions for composition operator $C_{\varphi}$ on Orlicz spaces to decline structural stability and even strongly structurally stability.
\begin{prop}\label{p2.9}
Let $(X, \mathcal{F},\mu, \varphi, C_{\varphi})$ be a dissipative composition dynamical system of bounded distortion, generated by $W$ and $\Phi\in \Delta'$. If
$$\overline{\lim}_{n\rightarrow\infty}\sup_{k\in\mathbb{N}_0}\left(\frac{\Phi^{-1}(\frac{1}{\mu(\varphi^{k+n}(W))})}{\Phi^{-1}(\frac{1}{\mu(\varphi^k(W))})}\right)^{\frac{1}{n}}<1 \ \ \ \text{and} \ \ \ \underline{\lim}_{n\rightarrow \infty}\inf_{k\in-\mathbb{N}_0}\left(\frac{\Phi^{-1}(\frac{1}{\mu(\varphi^{k}(W))})}{\Phi^{-1}(\frac{1}{\mu(\varphi^{k-n}(W))})}\right)^{\frac{1}{n}}>1,$$
then the composition operator $C_{\varphi}$ can not be structurally stable and so can not be even strongly structurally stable.
\end{prop}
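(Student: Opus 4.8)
The plan is to argue by contradiction through the absence of a hyperbolic splitting. Set $a_m:=N_{\Phi}(\chi_{\varphi^m(W)})=1/\Phi^{-1}(1/\mu(\varphi^m(W)))$ for $m\in\mathbb{Z}$, so that the two hypotheses become statements about the two-sided growth of the sequence $(a_m)$. Using the dissipative decomposition $X=\dot{\cup}_{m\in\mathbb{Z}}\varphi^m(W)$, I would first realize $L^{\Phi}(\mu)$ as the direct sum of the blocks $L^{\Phi}(\varphi^m(W))$; here the hypothesis $\Phi\in\Delta'$ (hence $\Phi\in\Delta_2$) is what makes this a genuine $\ell^{\Phi}$-type decomposition, on which the gauge norm is monotone with respect to enlarging the (pairwise disjoint) support and dominates the norm of every single block component. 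Since $C_{\varphi}^{n}(\chi_{\varphi^m(W)})=\chi_{\varphi^{m-n}(W)}$, the operator acts as a weighted shift lowering the block index by one, the normalized vector $u_m:=\chi_{\varphi^m(W)}/a_m$ satisfies $N_{\Phi}(C_{\varphi}^{n}u_m)=a_{m-n}/a_m$, and by Proposition \ref{p2.6} the same ratio, up to the uniform distortion constant, controls $N_{\Phi}(C_{\varphi}^{n}g)/N_{\Phi}(g)$ for every $g$ supported in $\varphi^m(W)$.

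Reading the hypotheses in these terms, taking $k=0$ in the first gives $a_n/a_0\to\infty$ and in the second gives $a_{-n}/a_0\to\infty$, so $a_m\to\infty$ as $m\to\pm\infty$; that is, $(a_m)$ has a V-shaped profile. Hence for every fixed $m$ both $N_{\Phi}(C_{\varphi}^{n}u_m)=a_{m-n}/a_m\to\infty$ and $N_{\Phi}(C_{\varphi}^{-n}u_m)=a_{m+n}/a_m\to\infty$. Because the gauge norm dominates any block component, for an arbitrary $f=\sum_m f_m\neq 0$ with $f_{m_0}\neq 0$ I would bound $N_{\Phi}(C_{\varphi}^{n}f)\geq N_{\Phi}(C_{\varphi}^{n}f_{m_0})$, which tends to $\infty$ with $n$ by the previous estimate and Proposition \ref{p2.6}; the same holds for $C_{\varphi}^{-n}$. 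Thus no nonzero vector has a bounded forward orbit and none has a bounded backward orbit, so both the stable and the unstable subspace of any candidate splitting are trivial and $C_{\varphi}$ cannot be hyperbolic. One can equally note that the V-shape makes each of the conditions \ref{ue1}, \ref{ue2}, \ref{ue3} fail, so that $C_{\varphi}$ is not even uniformly expansive by Theorem \ref{tt2.8}(4), a property that hyperbolicity would entail. The exponential rates in the hypotheses give the quantitative refinement $r(C_{\varphi})>1$ and $r(C_{\varphi}^{-1})>1$ (from the second hypothesis via $\|C_{\varphi}^{n}\|\geq a_{-n}/a_0$ and from the first via $\|C_{\varphi}^{-n}\|\geq a_{n}/a_0$, with $\Phi\in\Delta'$ used to pass from one-step ratios to the products $a_{m\mp n}/a_m$), so that, by the standard description of the spectrum of a bilateral weighted shift, $\sigma(C_{\varphi})$ is the full annulus $\{\,1/r(C_{\varphi}^{-1})\leq|z|\leq r(C_{\varphi})\,\}$ and therefore contains the unit circle $\mathbb{S}^{1}$.

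To conclude I would invoke the characterization that a structurally stable invertible operator must be hyperbolic, equivalently $\sigma(C_{\varphi})\cap\mathbb{S}^{1}=\emptyset$; since we have just produced spectrum on $\mathbb{S}^{1}$ (equivalently, annihilated both the stable and the unstable subspace), $C_{\varphi}$ is not structurally stable, and as strong structural stability implies structural stability it is not strongly structurally stable either. The step I expect to be the main obstacle is upgrading ``no block-aligned splitting'' to ``no splitting at all'': one must rule out a stable or unstable subspace assembled from genuinely infinite, non-block-aligned combinations, and this is precisely where the monotonicity and modular additivity of the Orlicz gauge norm over the disjoint dissipative pieces (so the hypothesis $\Phi\in\Delta'$) together with the uniform constant of bounded distortion from Proposition \ref{p2.6} must be applied with care. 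A secondary technical point is justifying the spectrum-equals-annulus description in the Orlicz rather than the $\ell^{p}$ setting, for which the $\Delta'$-multiplicativity of $\Phi^{-1}$ is again the essential tool.
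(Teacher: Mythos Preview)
Your route differs from the paper's. The paper (i) uses the second hypothesis to obtain $\inf_{n\in\mathbb{N}}\Phi^{-1}(1/\mu(\varphi^{-n}(W)))=0$, hence positive expansivity via Theorem~\ref{tt2.8}(1); (ii) shows that the two hypotheses together violate all of the shadowing conditions of Corollary~2.20 in \cite{eb}, so $C_\varphi$ lacks the shadowing property and is therefore not hyperbolic; and (iii) concludes via Theorem~6 of \cite{bme}. You instead read off from $k=0$ in each hypothesis that $a_m\to\infty$ as $m\to\pm\infty$ and then, through the block decomposition and monotonicity of the gauge norm, show that every nonzero $f$ satisfies both $N_\Phi(C_\varphi^{n}f)\to\infty$ and $N_\Phi(C_\varphi^{-n}f)\to\infty$, so that the stable and unstable subspaces are both $\{0\}$ and $C_\varphi$ is not hyperbolic. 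This is more self-contained, since it bypasses \cite{eb} entirely. Incidentally, the obstacle you anticipated---ruling out non-block-aligned stable or unstable subspaces---is not actually an obstacle: your argument already treats an \emph{arbitrary} nonzero $f$ by selecting one block with $f_{m_0}\neq0$ and using $N_\Phi(C_\varphi^{n}f)\ge N_\Phi(C_\varphi^{n}f_{m_0})$; no splitting-specific analysis is required.

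The genuine gap is at the very last step. The assertion ``a structurally stable invertible operator must be hyperbolic'' is \emph{not} a known theorem on Banach spaces; whether structural stability alone forces hyperbolicity is, in general, open. What Theorem~6 of \cite{bme} actually provides is the conditional implication ``positively expansive and structurally stable $\Rightarrow$ hyperbolic''. Your own work already supplies the missing hypothesis---you proved $N_\Phi(C_\varphi^{n}f)\to\infty$ for every nonzero $f$, which is exactly positive expansivity by Proposition~\ref{p1.1}(a)---so the repair is simply to record this explicitly and then invoke Theorem~6 of \cite{bme} in its correct conditional form, precisely as the paper does. With that in place the spectrum-as-annulus detour becomes unnecessary and can be dropped, which also disposes of the Orlicz-specific technicality you flagged as unresolved.
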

\begin{proof}
Since $$\underline{\lim}_{n\rightarrow \infty}\inf_{k\in-\mathbb{N}_0}\left(\frac{\Phi^{-1}(\frac{1}{\mu(\varphi^{k}(W))})}{\Phi^{-1}(\frac{1}{\mu(\varphi^{k-n}(W))})}\right)^{\frac{1}{n}}>1,$$
then there exists $n_0\in \mathbb{N}_0$ and $t>1$ such that for every $n\geq n_0$,
$$\inf_{k\in-\mathbb{N}_0}\left(\frac{\Phi^{-1}(\frac{1}{\mu(\varphi^{k}(W))})}{\Phi^{-1}(\frac{1}{\mu(\varphi^{k-n}(W))})}\right)^{\frac{1}{n}}\geq t>1.$$
Hence
$$\frac{\Phi^{-1}(\frac{1}{\mu(W)})}{\Phi^{-1}(\frac{1}{\mu(\varphi^{-n}(W))})}\geq \inf_{k\in-\mathbb{N}_0}\frac{\Phi^{-1}(\frac{1}{\mu(\varphi^{k}(W))})}{\Phi^{-1}(\frac{1}{\mu(\varphi^{k-n}(W))})}\geq t^n.$$
This implies that $\sup_{n\in \mathbb{N}}\frac{\Phi^{-1}(\frac{1}{\mu(W)})}{\Phi^{-1}(\frac{1}{\mu(\varphi^{-n}(W))})}=\infty$ and therefore
$$\inf_{n\in \mathbb{N}}\Phi^{-1}(\frac{1}{\mu(\varphi^{-n}(W))})=0.$$
Thus by theorem \ref{t2.2}, we get that $C_{\varphi}$ is positively expansive. Moreover, for every $n\geq n_0$,
$$\sup_{k\in-\mathbb{N}_0}\left(\frac{\Phi^{-1}(\frac{1}{\mu(\varphi^{k}(W))})}{\Phi^{-1}(\frac{1}{\mu(\varphi^{k-n}(W))})}\right)^{\frac{1}{n}}\geq\inf_{k\in-\mathbb{N}_0}\left(\frac{\Phi^{-1}(\frac{1}{\mu(\varphi^{k}(W))})}{\Phi^{-1}(\frac{1}{\mu(\varphi^{k-n}(W))})}\right)^{\frac{1}{n}}\geq t>1.$$
Hence we get that
$$\overline{\lim}_{n\rightarrow \infty}\sup_{k\in \mathbb{Z}}\left(\frac{\Phi^{-1}(\frac{1}{\mu(\varphi^{k}(W))})}{\Phi^{-1}(\frac{1}{\mu(\varphi^{k-n}(W))})}\right)^{\frac{1}{n}}\geq \overline{\lim}_{n\rightarrow \infty}\sup_{k\in -\mathbb{N}_0}\left(\frac{\Phi^{-1}(\frac{1}{\mu(\varphi^{k}(W))})}{\Phi^{-1}(\frac{1}{\mu(\varphi^{k-n}(W))})}\right)^{\frac{1}{n}}>1.$$
In addition, since $$\overline{\lim}_{n\rightarrow\infty}\sup_{k\in\mathbb{N}_0}\left(\frac{\Phi^{-1}(\frac{1}{\mu(\varphi^{k+n}(W))})}{\Phi^{-1}(\frac{1}{\mu(\varphi^k(W))})}\right)^{\frac{1}{n}}<1,$$
then there exists $m_0\in \mathbb{N}_0$ and $0<t<1$ such that for every $n\geq m_0$,
$$\sup_{k\in\mathbb{N}_0}\left(\frac{\Phi^{-1}(\frac{1}{\mu(\varphi^{k+n}(W))})}{\Phi^{-1}(\frac{1}{\mu(\varphi^k(W))})}\right)^{\frac{1}{n}}\leq t<1.$$
This implies that
$$\inf_{k\in\mathbb{Z}}\left(\frac{\Phi^{-1}(\frac{1}{\mu(\varphi^{k+n}(W))})}{\Phi^{-1}(\frac{1}{\mu(\varphi^k(W))})}\right)^{\frac{1}{n}}\leq\inf_{k\in\mathbb{N}_0}\left(\frac{\Phi^{-1}(\frac{1}{\mu(\varphi^{k+n}(W))})}{\Phi^{-1}(\frac{1}{\mu(\varphi^k(W))})}\right)^{\frac{1}{n}}\leq t<1$$
and consequently
$$\underline{\lim}_{n\rightarrow \infty}\inf_{k\in\mathbb{Z}}\left(\frac{\Phi^{-1}(\frac{1}{\mu(\varphi^{k+n}(W))})}{\Phi^{-1}(\frac{1}{\mu(\varphi^k(W))})}\right)^{\frac{1}{n}}\leq \underline{\lim}_{n\rightarrow \infty}\inf_{k\in\mathbb{N}_0}\left(\frac{\Phi^{-1}(\frac{1}{\mu(\varphi^{k+n}(W))})}{\Phi^{-1}(\frac{1}{\mu(\varphi^k(W))})}\right)^{\frac{1}{n}}<1.$$

Based on the above observations, we can conclude that none of conditions 2.3, 2.4, and 2.5 of Corollary 2.20 in \cite{eb} are met. This implies that $C_{\varphi}$ does not possess the shadowing property and, therefore, is not hyperbolic. Consequently, $C_{\varphi}$ is positively expansive but not hyperbolic. According to Theorem 6 in \cite{bme}, this means that $C_{\varphi}$ is not structurally stable and, consequently, not even strongly structurally stable.
\end{proof}
\begin{thm}\label{tt2.10}
Let $(X, \mathcal{F},\mu, \varphi, C_{\varphi})$ be a dissipative composition dynamical system of bounded distortion, generated by $W$ and $\Phi\in \Delta'$. If at least one of the following inequalities holds:
\begin{equation}\label{hc}
 \overline{\lim}_{n\rightarrow\infty}\sup_{k\in\mathbb{Z}}\left(\frac{\Phi^{-1}(\frac{1}{\mu(\varphi^{k+n}(W))})}{\Phi^{-1}(\frac{1}{\mu(\varphi^{k}(W))})}\right)^{\frac{1}{n}}<1 \end{equation}
  \begin{equation}\label{hd} \underline{\lim}_{n\rightarrow\infty}\inf_{k\in\mathbb{Z}}\left(\frac{\Phi^{-1}(\frac{1}{\mu(\varphi^{k+n}(W))})}{\Phi^{-1}(\frac{1}{\mu(\varphi^{k}(W))})}\right)^{\frac{1}{n}}>1 \end{equation}
\begin{equation}\label{gh}
\overline{\lim}_{n\rightarrow\infty}\sup_{k\in-\mathbb{N}_0}\left(\frac{\Phi^{-1}(\frac{1}{\mu(\varphi^{k}(W))})}{\Phi^{-1}(\frac{1}{\mu(\varphi^{k-n}(W))})}\right)^{\frac{1}{n}}<1
 \ \ \text{and} \ \ \underline{\lim}_{n\rightarrow\infty}\inf_{k\in\mathbb{N}_0}\left(\frac{\Phi^{-1}(\frac{1}{\mu(\varphi^{k}(W))})}{\Phi^{-1}(\frac{1}{\mu(\varphi^{k+n}(W))})}\right)^{\frac{1}{n}}>1, \end{equation}
 then the composition operator $C_{\varphi}$ is strongly structurally stable.
\end{thm}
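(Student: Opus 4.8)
The plan is to reduce the statement to the shadowing property: I would show that each of the three hypotheses \ref{hc}, \ref{hd}, \ref{gh} forces $C_{\varphi}$ to satisfy one of the shadowing criteria of Corollary 2.20 in \cite{eb}, and then invoke the fact that, in a dissipative system of bounded distortion, the shadowing property implies strong structural stability.

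First I would rewrite the $\Phi^{-1}$-ratios appearing in \ref{hc}, \ref{hd}, \ref{gh} in terms of the norms $N_{\Phi}(C^{k}_{\varphi}\chi_{W})$. Using $N_{\Phi}(\chi_{F})=1/\Phi^{-1}(1/\mu(F))$ and $C^{k}_{\varphi}\chi_{W}=\chi_{\varphi^{-k}(W)}$, one has $\Phi^{-1}(1/\mu(\varphi^{k}(W)))=1/N_{\Phi}(C^{-k}_{\varphi}\chi_{W})$, so that after the substitution $m=-k-n$ the ratio in \ref{hc} becomes $N_{\Phi}(C^{m+n}_{\varphi}\chi_{W})/N_{\Phi}(C^{m}_{\varphi}\chi_{W})$, i.e. the product of $n$ consecutive weights of the bilateral weighted-shift model attached to the dissipative system. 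Thus \ref{hc} expresses uniform exponential contraction of the forward orbit (the purely stable case), \ref{hd} uniform exponential expansion (the purely unstable case), and \ref{gh} the genuinely hyperbolic splitting into a forward-stable and a backward-unstable half.

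Next, with Proposition \ref{p2.6} supplying the generalized bounded-distortion estimate and $\Phi\in\Delta'$ controlling the submultiplicative behaviour of $\Phi^{-1}$, I would match these three reformulations exactly with conditions 2.3, 2.4 and 2.5 of Corollary 2.20 in \cite{eb}. This is precisely the situation complementary to Proposition \ref{p2.9}, where the negations of the same three conditions were verified in order to deny shadowing; here, since at least one of \ref{hc}, \ref{hd}, \ref{gh} is assumed, Corollary 2.20 of \cite{eb} yields that $C_{\varphi}$ has the shadowing property and is hyperbolic.

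Finally I would conclude with the implication, established for dissipative systems of bounded distortion in \cite{mm}, that the shadowing property forces strong structural stability, so that $C_{\varphi}$ is strongly structurally stable. The main obstacle I expect lies in the first two steps: carefully handling the index bookkeeping and the $\Delta'$-based estimates needed to transfer from $\Phi^{-1}$-ratios to weight ratios, so that the hypotheses line up with the exact form of the three cases in Corollary 2.20 of \cite{eb}; once the shadowing property is in hand, the passage to strong structural stability is a direct citation.
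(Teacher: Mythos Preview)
Your route and the paper's are close in spirit but not identical. The paper's proof is a single citation: it observes that each of \ref{hc}, \ref{hd}, \ref{gh} is exactly one of the hypotheses of Theorem~2.16 in \cite{eb}, which delivers strong structural stability directly (via generalized hyperbolicity), without passing through the shadowing property as an intermediate notion. Your plan instead matches \ref{hc}--\ref{gh} with the three shadowing criteria of Corollary~2.20 in \cite{eb}, obtains shadowing, and only then deduces strong structural stability. The index rewriting you sketch is correct, and since Proposition~\ref{p2.9} already verified that the \emph{negations} of \ref{hc}--\ref{gh} match the negations of conditions~2.3--2.5, the positive matching you need is indeed available with no new work.

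There is, however, a gap in your last step. You close by invoking \cite{mm} for ``shadowing $\Rightarrow$ strong structural stability in dissipative systems of bounded distortion,'' but \cite{mm} is written for $L^{p}(\mu)$, not for Orlicz spaces $L^{\Phi}(\mu)$; its estimates use the $L^{p}$ norm of characteristic functions and do not transfer verbatim. In the present paper that implication for Orlicz spaces is the content of Corollary~\ref{csc1}, which is \emph{derived from} Theorem~\ref{tt2.10} together with Corollary~CSC of \cite{eb}; so you cannot cite it here without circularity. To repair your argument you should replace the appeal to \cite{mm} either by the general Banach-space implication (generalized hyperbolicity $\Rightarrow$ strong structural stability, available from \cite{bme}/\cite{eb}) after noting that Corollary~2.20 of \cite{eb} gives generalized hyperbolicity along with shadowing, or simply cite Theorem~2.16 of \cite{eb} directly as the paper does and skip the detour.
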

\begin{proof}
If one of the conditions \ref{hc}, \ref{hd} and \ref{gh} holds, then by Theorem 2.16 of \cite{eb}
we get that $C_{\varphi}$ is strongly structurally stable.
\end{proof}
Now by using Theorem \ref{tt2.10} and also Corollary CSC of \cite{eb} we have the next corollary.
\begin{cor}\label{csc1}
Let $(X, \mathcal{F},\mu, \varphi, C_{\varphi})$ be a dissipative composition dynamical system of bounded distortion, generated by $W$ and $\Phi\in \Delta'$. Then the composition operator $C_{\varphi}$ is strongly structurally stable when it has shadowing property or equivalently is generalized hyperbolic.
\end{cor}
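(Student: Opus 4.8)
The plan is to obtain Corollary \ref{csc1} as a short bridge between the characterization of shadowing supplied by Corollary CSC of \cite{eb} and the sufficient condition for strong structural stability already recorded in Theorem \ref{tt2.10}. The guiding observation is that, throughout the dissipative-of-bounded-distortion machinery, the role played in the $L^p$ framework of \cite{eb} by the norms $\|\chi_F\|_p=\mu(F)^{1/p}$ is played here by $N_{\Phi}(\chi_F)=1/\Phi^{-1}(\frac{1}{\mu(F)})$. In particular the normalized weights governing the dynamics are the quantities $N_{\Phi}(C^k_{\varphi}(\chi_W))$, and by Proposition \ref{p2.6} every norm computation over $\mathcal{F}_W$ is comparable, up to the bounded-distortion constant, to its value on the single generating set $W$. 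Consequently the spectral/growth quantities appearing in \cite{eb} translate into expressions of exactly the form occurring in \ref{hc}, \ref{hd}, \ref{gh}, namely the $n$-th roots of ratios $\Phi^{-1}(\frac{1}{\mu(\varphi^{k+n}(W))})/\Phi^{-1}(\frac{1}{\mu(\varphi^{k}(W))})$.

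First I would invoke Corollary CSC of \cite{eb}, transported to the Orlicz setting through this dictionary, to conclude that for a dissipative composition dynamical system of bounded distortion generated by $W$ with $\Phi\in\Delta'$, the shadowing property of $C_{\varphi}$ is equivalent to its generalized hyperbolicity, and that both are in turn equivalent to the statement that at least one of the conditions \ref{hc}, \ref{hd}, \ref{gh} holds. The hypothesis $\Phi\in\Delta'$ is what makes this transport legitimate: it forces $\Phi\in\Delta_2$ (so the density of simple functions, the duality, and Theorem \ref{t1.1} used in the proofs of Theorems \ref{t2.2} and \ref{tt2.10} are all available), and, crucially, the submultiplicativity $\Phi(xy)\le c\,\Phi(x)\Phi(y)$ is precisely the feature that lets the rates $(\cdot)^{1/n}$ compose multiplicatively, so that the root-type quantities above behave like the spectral-radius quantities of \cite{eb}.

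With that equivalence in hand the conclusion is immediate. Suppose $C_{\varphi}$ has the shadowing property, equivalently is generalized hyperbolic; then the equivalence just cited guarantees that one of \ref{hc}, \ref{hd}, \ref{gh} is satisfied, and a single application of Theorem \ref{tt2.10} yields that $C_{\varphi}$ is strongly structurally stable. Thus the entire content of the corollary reduces to reading off one implication from Corollary CSC of \cite{eb} and feeding it into Theorem \ref{tt2.10}.

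The main obstacle I anticipate is not this final one-line deduction but rather the faithful transfer of Corollary CSC of \cite{eb} from $L^p$ to $L^{\Phi}$. One must check that the characterization of shadowing and generalized hyperbolicity in \cite{eb} uses the dissipative-of-bounded-distortion structure \emph{only} through the normalized weights, and that replacing $\mu(\cdot)^{1/p}$ by $1/\Phi^{-1}(\frac{1}{\mu(\cdot)})$ preserves every inequality in the argument, with $\Delta'$ furnishing the control on $n$-th roots. Verifying that no step of \cite{eb} secretly exploits a property of the $L^p$-norm beyond those shared by $N_{\Phi}$ under $\Phi\in\Delta'$ — that is, making the dictionary between the two settings watertight — is where the genuine work of this corollary lies.
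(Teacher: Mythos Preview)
Your approach is essentially the paper's: combine Corollary CSC of \cite{eb} with Theorem \ref{tt2.10}. The paper gives no separate proof of this corollary; it is stated as an immediate consequence of exactly those two ingredients.

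There is, however, a misconception in your write-up that manufactures an obstacle where none exists. You treat \cite{eb} as an $L^p$ paper whose results must be ``transported to the Orlicz setting through a dictionary.'' But \cite{eb} is \emph{Hyperbolic composition operators on Orlicz spaces}; its Corollary CSC and Theorem 2.16 are already formulated for $L^{\Phi}(\mu)$ with $\Phi\in\Delta'$, and already use the quantities $\Phi^{-1}(\frac{1}{\mu(\varphi^k(W))})$ directly. Hence the equivalence of shadowing, generalized hyperbolicity, and the three conditions \ref{hc}, \ref{hd}, \ref{gh} is a citation, not a transfer; the entire paragraph about making ``the dictionary between the two settings watertight'' is unnecessary. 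Once that is removed, what remains of your argument is exactly the one-line deduction the paper intends: shadowing $\Leftrightarrow$ generalized hyperbolic $\Leftrightarrow$ one of \ref{hc}--\ref{gh} holds (by \cite{eb}), hence $C_{\varphi}$ is strongly structurally stable (by Theorem \ref{tt2.10}).
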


\begin{thm}
Let  $(X, \mathcal{F},\mu, \varphi, C_{\varphi})$ be a dissipative composition dynamical system of bounded distortion, generated by $W$, $\inf_{n\in\mathbb{N}}\Phi^{-1}(\frac{1}{\mu(\varphi^{-n}(W))})=0$ and $\Phi\in \Delta'$. Then the composition operator $C_{\varphi}$ is strongly structurally stable if and only if it has the shadowing property.
\end{thm}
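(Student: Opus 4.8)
The plan is to prove the two implications separately, the key observation being that under the stated hypotheses the extra condition $\inf_{n\in\mathbb{N}}\Phi^{-1}(\frac{1}{\mu(\varphi^{-n}(W))})=0$ is, by Theorem \ref{tt2.8}(1), precisely the assertion that $C_\varphi$ is positively expansive. The implication ``shadowing $\Rightarrow$ strong structural stability'' needs nothing beyond what is already available: the system is a dissipative composition dynamical system of bounded distortion generated by $W$ with $\Phi\in\Delta'$, so Corollary \ref{csc1} applies directly and shows that the shadowing property (equivalently, generalized hyperbolicity) forces $C_\varphi$ to be strongly structurally stable. This half is therefore immediate, and the positive-expansivity hypothesis is not even used for it.

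For the converse, ``strong structural stability $\Rightarrow$ shadowing'', I would first note that strong structural stability is the stronger of the two stability notions, so in particular $C_\varphi$ is structurally stable. Invoking the extra hypothesis through Theorem \ref{tt2.8}(1), the operator $C_\varphi$ is also positively expansive. I would then apply Theorem 6 of \cite{bme} in the same manner as in the proof of Proposition \ref{p2.9}: that result, read in contrapositive form, says that a positively expansive operator which is structurally stable must be hyperbolic. Since hyperbolicity implies generalized hyperbolicity, and generalized hyperbolicity is equivalent to the shadowing property in this setting (the equivalence already recorded in Corollary \ref{csc1}), I conclude that $C_\varphi$ has the shadowing property, completing the equivalence.

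The genuinely substantive step is the middle one of the converse, where positive expansivity is exactly what upgrades mere structural stability to (generalized) hyperbolicity; without it the chain breaks, which is why the extra spectral-type hypothesis appears in the statement. The remaining work is bookkeeping: I must check that the invertibility required by Theorem 6 of \cite{bme} and by the hyperbolicity/shadowing equivalence holds, and it does, since $\varphi$ is injective and bimeasurable so that $C_\varphi$ is invertible with $C_\varphi^{-1}=C_{\varphi^{-1}}$, while $\Phi\in\Delta'\subseteq\Delta_2$ guarantees boundedness of $C_\varphi$ and of its inverse on $L^\Phi(\mu)$. Collecting these facts yields both directions and hence the stated equivalence.
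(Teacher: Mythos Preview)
Your proof is correct and follows essentially the same route as the paper: the forward implication is handled by Corollary \ref{csc1}, and the converse combines Theorem \ref{tt2.8}(1) (to obtain positive expansivity from the hypothesis on $\Phi^{-1}$) with Theorem 6 of \cite{bme}. Your write-up is in fact a bit more careful than the paper's, since you make explicit the passage from strong structural stability to structural stability and the chain hyperbolic $\Rightarrow$ generalized hyperbolic $\Rightarrow$ shadowing, together with the invertibility checks.
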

\begin{proof}
If $C_{\varphi}$ has the shadowing property, then by Corollary \ref{csc1} we get that $C_{\varphi}$ is strongly structurally stable. Conversely, if $C_{\varphi}$ is strongly structurally stable, then by the assumption $\inf_{n\in\mathbb{N}}\Phi^{-1}(\frac{1}{\mu(\varphi^{-n}(W))})=0$ and by Theorem \ref{tt2.8} we have $C_{\varphi}$ is positively expansive. Therefore by Theorem 6 of \cite{bme} we get the proof.
\end{proof}
\textbf{Declarations}\\
     \textbf{Acknowledgement.} My manuscript has no associate data.

\end{document}